\pgfplotsset{compat=1.18} 
\theoremstyle{plain}
\newtheorem{theorem}{Theorem}[section]
\newtheorem{lemma}[theorem]{Lemma}
\newtheorem{proposition}[theorem]{Proposition}
\theoremstyle{definition}
\newtheorem{rem}{Remark}
\newcommand{\Li}{\operatorname{Li}}
\newcommand{\re}{\operatorname{Re}}
\newcommand{\im}{\operatorname{Im}}
\newcommand{\Tr}{\operatorname{Tr}}
\def\C{\mathbb{C}}
\def\E{\mathbb{E}}
\def\P{\mathbb{P}}
\def\R{\mathbb{R}}
\begin{document}

\begin{frontmatter}
\title{Moderate-to-large deviation asymptotics for \\real eigenvalues of the elliptic Ginibre matrices}
\runtitle{From moderate to large deviation of real eigenvalue statistics}

\begin{aug}
\author[A]{\inits{S.-S.}\fnms{Sung-Soo}~\snm{Byun}\ead[label=e1]{sungsoobyun@snu.ac.kr}}
\author[B]{\inits{J.}\fnms{Jonas}~\snm{Jalowy}\ead[label=e2]{jjalowy@math.upb.de}}
\author[C]{\inits{Y.-W.}\fnms{Yong-Woo}~\snm{Lee}\ead[label=e3]{hellowoo@snu.ac.kr}}
\author[D]{\inits{G.}\fnms{Gr{\'e}gory}~\snm{Schehr}\ead[label=e4]{schehr@lpthe.jussieu.fr}}
\address[A]{Department of Mathematical Sciences and Research Institute of Mathematics, Seoul National University, Seoul 151-747, Republic of Korea\printead[presep={,\ }]{e1}}

\address[B]{Institute of Mathematics, Paderborn University, Warburger Str. 100, 33098 Paderborn, Germany\printead[presep={,\ }]{e2}}

\address[C]{Department of Mathematical Sciences, Seoul National University, Seoul 151-747, Republic of Korea\printead[presep={,\ }]{e3}}

\address[D]{Sorbonne Universit\'e, Laboratoire de Physique Th\'eorique et Hautes Energies, CNRS UMR 7589, 4 Place Jussieu, 75252 Paris Cedex 05, France\printead[presep={,\ }]{e4}}
\end{aug}

\begin{abstract}
We study the statistics of the number of real eigenvalues in the elliptic deformation of the real Ginibre ensemble. As the matrix dimension grows, the law of large numbers and the central limit theorem for the number of real eigenvalues are well understood, but the probabilities of rare events remain largely unexplored. Large deviation type results have been obtained only in extreme cases---when either a vanishingly small proportion of eigenvalues are real or almost all eigenvalues are real. Here, in both the strong and weak asymmetry regimes, we derive the probabilities of rare events in the moderate-to-large deviation regime, thereby providing a natural connection between the previously known regime of Gaussian fluctuations and the large deviation regime. Our results are new even for the classical real Ginibre ensemble.
\end{abstract}

\begin{keyword}
\kwd{Large deviations}
\kwd{Moderate deviations}
\kwd{Real eigenvalues}
\kwd{Real elliptic Ginibre ensemble}
\end{keyword}

\end{frontmatter}


\pgfmathdeclarefunction{gauss}{2}{%
    \pgfmathparse{1/(#2*sqrt(2*pi))*exp(-((x-#1)^2)/(2*#2^2))}%
}

\section{Introduction}
 
Probabilistic limit theorems, which are central to modern probability theory and its applications---such as in statistical physics---provide a unifying framework for understanding fluctuations in complex systems across different scales. For a system of size $n$ (for example, the sum of $n$ independent random variables), the central limit theorem (CLT) typically yields Gaussian laws with fluctuations of order $n^{1/2}$, while the large deviation principle (LDP) characterises rare events of order $n$ with exponential precision. Between these two regimes lies the \emph{moderate deviation scale}, in which fluctuations exceed the CLT scale but remain sublinear in $n$. Results in this regime refine the Gaussian approximation and capture the onset of large deviation behaviour.  

In random matrix theory, while central limit theorems for global statistics are well established and large deviation methods capture extreme fluctuations, systematic results in the intermediate regime—the moderate deviation principles—have received comparatively little attention. 
This intermediate scaling behaviour has recently been investigated in the context of the complex Ginibre ensemble, where it occurs in the statistics of the eigenvalue with the largest modulus \cite{LGMS18} and in eigenvalue counting statistics \cite{LGCCKMS19,LMS19}, i.e. the number of eigenvalues within a prescribed region of the complex plane, see also \cite{Ch23,Ch22,ABE23,ACCL24,FKP24,CESX22,XZ24,BP26,MMO25,GPX24,LS25,MM25,HM25} and references therein for recent work on related variants. In this work, we address this regime for a fundamental observable in non-Hermitian random matrix theory: \emph{the number of real eigenvalues in asymmetric random matrices}.

The statistics of real eigenvalues of non-Hermitian random matrices have been studied extensively in the literature. In particular, the number of real eigenvalues has been analysed in a variety of models, most notably the real Ginibre ensemble \cite{EKS94,FN07,Si17a,KPTTZ15} and its extensions. Prominent examples include the elliptic Ginibre ensembles \cite{FN08,BKLL23,BL24,Fo24}, spherical Ginibre matrices \cite{FM12,EKS94,Fo25}, the truncated orthogonal ensemble \cite{FIK20,FK18,LMS22,GP19}, products of GinOE matrices \cite{AB23,Si17a,Fo14,FI16, FS23a}, and asymmetric Wishart matrices \cite{BN25}. 

Real eigenvalues of non-Hermitian random matrices, such as those in the real Ginibre ensemble, play a central role in physics, particularly in characterizing the spectral and dynamical properties of complex systems. Their number and distribution provide valuable insights into phase transitions, symmetry classes, and the onset of localization, thereby reflecting fundamental features of open and dissipative quantum systems (see, e.g. \cite{BB98,XKLOS22}). Beyond their role as spectral diagnostics, real eigenvalues are also crucial for assessing the stability of complex systems.  
For example, they play a key role in determining the number of equilibrium points in non-relaxational dynamical systems~\cite{BFK21,Fyo16,Kiv24}, which generalize May’s classical model of ecological networks~\cite{May72}. 
Similarly, in neural networks, they correspond to non-oscillatory relaxation modes that govern the network’s response to perturbations and overall dynamical stability \cite{RA06,SCSS88}. Finally, very interesting connections have been unveiled between the statistics of the real eigenvalues in the real Ginibre ensemble and annihilating Brownian particles on the line (see e.g. \cite{GPTZ18}). 
 
We now introduce our object of study.  
By definition, the real Ginibre ensemble (GinOE) is the ensemble of $n \times n$ matrices $G$ with i.i.d.\ standard normal entries, see \cite{BF25} for a review.  
Its elliptic extension, designed to interpolate between GinOE and the symmetric Gaussian orthogonal ensemble (GOE), is known as the elliptic GinOE (eGinOE). It is defined by  
\begin{equation*} 
    X := \frac{\sqrt{1+\tau}}{2}(G+G^T)+\frac{\sqrt{1-\tau}}{2}(G-G^T),
\end{equation*}
where \( \tau \in [0,1) \) is the asymmetry parameter. In particular, $\tau=0$ recovers GinOE, while the limit $\tau \to 1$ yields the classical Gaussian orthogonal ensemble (GOE) \cite{Fo10}. It is well known, under the name of the \emph{elliptic law}, that as $n \to \infty$, the empirical spectral measure of the eigenvalues of $X$ converges to the uniform measure supported on the ellipse 
\begin{equation*} 
\Big\{ z \in \C: \Big( \frac{\re z}{1+\tau}\Big)^2 + \Big(\frac{\im z}{1-\tau}\Big)^2 \le 1 \Big \};
\end{equation*}
see Figure \ref{Fig_realisations}.

We begin with a brief overview of the known results on the statistics of the number of real eigenvalues in the eGinOE.
In the study of the eGinOE, there are two distinct asymptotic regimes:
\begin{itemize}
    \item \textbf{Strong Asymmetry}: the parameter $\tau \in [0,1)$ is kept 
    fixed as $n \to \infty$, see Figure \ref{Fig_realisations} (a).  
    \smallskip 
    \item \textbf{Weak Asymmetry} \cite{FKS97,FKS98}: the parameter $\tau \equiv \tau_n \to 1$ at a prescribed rate, see Figure \ref{Fig_realisations} (b). We focus on the scaling
    $\tau = 1 - \alpha^2/n$ for fixed $\alpha \in (0,\infty)$. 
Such a scaling arises naturally from the geometry of the elliptic law. 
The support of the limiting density has area $\mathcal{A} = \pi(1-\tau^2),$ and hence the typical interparticle spacing scales as $\Delta z_{\mathrm{inter}} \sim \sqrt{\mathcal{A}/n}.$ 
A crossover occurs when the vertical scale of the support, of order $1-\tau$, becomes comparable to this spacing, which yields $1-\tau = O(1/n).$ 
In this regime, the eigenvalues concentrate near the real axis while exhibiting fluctuations in the imaginary direction on the same scale as the typical spacing. 
Consequently, the statistics of real eigenvalues display a critical transition between those of symmetric and asymmetric random matrices.
\end{itemize}

\begin{figure}[t]
    \begin{subfigure}{6.8cm}
        \begin{center}
            \includegraphics[width=6.8cm]{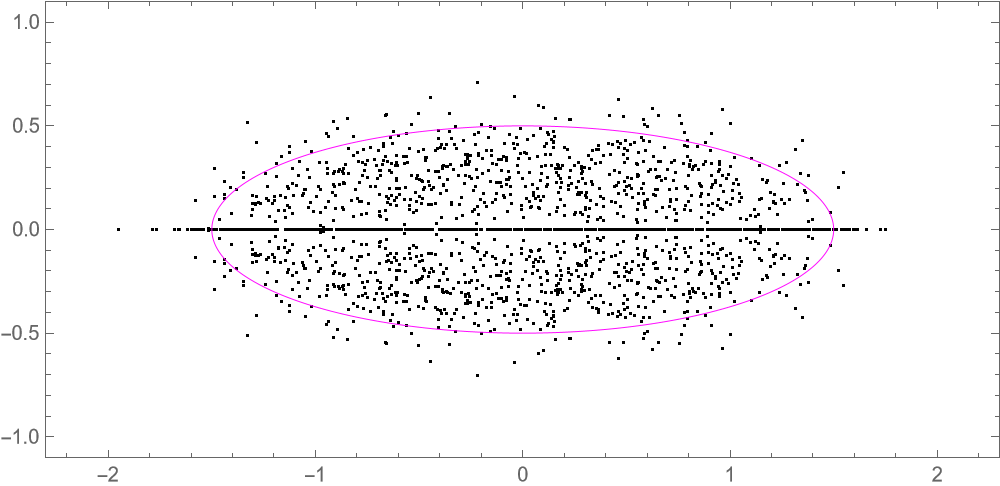}
        \end{center}
        \subcaption{ $\tau=1/2$\, (strong asymmetry). }  \label{Fig_realisation_SA}
    \end{subfigure}
    \qquad
    \begin{subfigure}{6.8cm}
        \begin{center}
            \includegraphics[width=6.8cm]{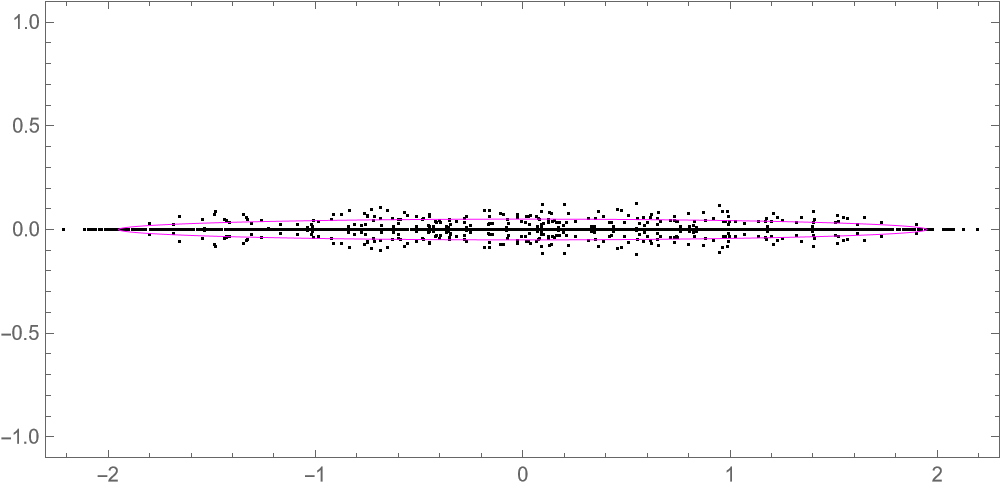}
        \end{center}
        \subcaption{ $\tau=19/20$ \, (weak asymmetry).  } \label{Fig_realisation_WA}
    \end{subfigure}
    \caption{Eigenvalues of $100$ realisations of the eGinOE with $n=20$ for a given $\tau$.}
    \label{Fig_realisations}
\end{figure}

Let $\mathcal{N}_n$ denote the number of real eigenvalues of the eGinOE 
matrix $X$. We denote by $\P$ and $\E$ the probability measure and expectation associated with $\mathcal{N}_n$, respectively.  
We first discuss the typical behaviour of the random variable $\mathcal{N}_n$. Here we emphasise that the results in most of the literature below (e.g.~\cite{BMS25,BKLL23,FN08,KPTTZ15}) were obtained for even dimensions~$n$, see however \cite{KA2005,FM09,Si07,Si09} for the analysis for odd dimensions.  
Nevertheless, it is believed that the leading asymptotic behaviour remains the same for odd dimensions, as in the case $\tau=0$ for $\E \mathcal{N}_{n}$ and for the scaling limits of the correlation functions. We refer the interested reader to the end of Section~\ref{sec. Exponential profiles} for a further discussion in this direction.

\begin{itemize}
    \item For the strong asymmetry regime, where $\tau$ is fixed, it has been shown that both the mean and the variance of $\mathcal{N}_n$ grow at order $O(\sqrt{n})$. More precisely, 
    \begin{equation} \label{eq. mean and variance sH}
     \lim_{ n \to \infty } \frac{ \mathbb{E} \mathcal{N}_n }{ \sqrt{n} } = \sqrt{\frac{2}{\pi} \frac{1+\tau}{1-\tau}  }, \qquad   \lim_{n \to \infty} \frac{ \textup{Var}\, \mathcal{N}_n }{ \sqrt{n} } = (2-\sqrt{2})   \sqrt{\frac{2}{\pi} \frac{1+\tau}{1-\tau}  } . 
    \end{equation}
    These were established in \cite{BKLL23,FN08}, and for the special case $\tau=0$ (the GinOE), these results had appeared in earlier works \cite{EKS94,FN07}.
    \smallskip 
    \item For the weak asymmetry regime, where $\tau = 1 - \alpha^2/n$, the growth is of a larger order $O(n)$. In this case, it was shown in \cite{BKLL23} that   
     \begin{equation} \label{eq. mean and variance wH}
  \lim_{ n \to \infty } \frac{ \mathbb{E} \mathcal{N}_n }{ n } = c(\alpha), \qquad  \lim_{n \to \infty} \frac{ \textup{Var}\, \mathcal{N}_n }{ n  } =  2 \Big( c(\alpha)-   c(\sqrt{2}\alpha) \Big). 
\end{equation} 
Here, \begin{equation} \label{eq. coeff avg number} 
   c(\alpha) := e^{-\frac{\alpha^2}{2}} \Big( I_0\big(\tfrac{\alpha^2}{2}\big) + I_1\big(\tfrac{\alpha^2}{2}\big) \Big),  
\end{equation}
where $I_\nu$ is the modified Bessel function of the first kind \cite[Chapter 10]{NIST}. 
\end{itemize}
 
Beyond the mean and variance of $\mathcal{N}_n$, a central limit theorem has also been established for both the strong and weak asymmetry regimes. More precisely, it was shown in \cite{BMS25,Fo24,Si17} that  
\begin{equation*}
\frac{\mathcal N_n - \mathbb E \mathcal N_n  }{\sqrt{ \textup{Var} \mathcal N_n } }  \to \textup{N}(0,1), \qquad  \textup{as } n \to \infty,
\end{equation*}
where $\mathrm{N}(0,1)$ denotes the standard normal distribution.  
See Figure~\ref{Fig_illustration of pnm} for an illustration of the typical Gaussian fluctuation regime.

In addition to the number of real eigenvalues, the averaged real eigenvalue density and its asymptotic behaviour were also derived in \cite{EKS94,FN08} for the strong asymmetry regime and in \cite{BKLL23,Efe97} for the weak asymmetry regime. The precise convergence rates, often called the finite-size corrections, were further obtained in \cite{BL24}. These asymptotic behaviours of the real eigenvalue density play an important role in the study of the complexity of random landscapes arising in non-gradient flows \cite{Fyo16,BFK21}.

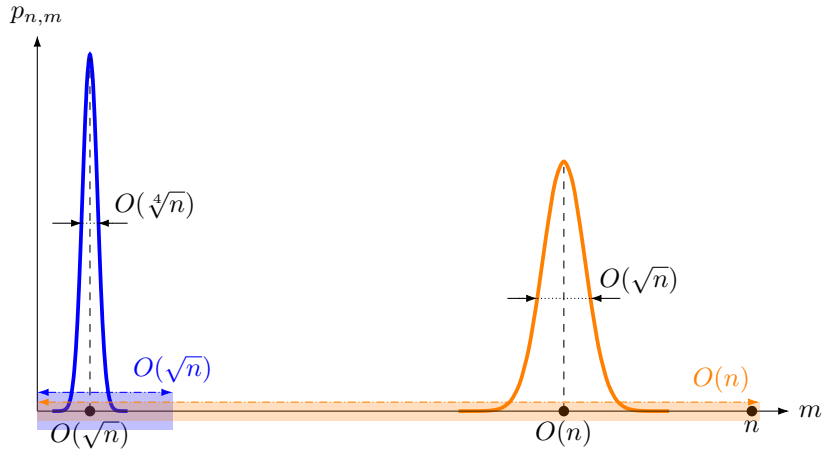
\begin{figure}
    \begin{center}
        \begin{tikzpicture}[scale=1, transform shape]
            \begin{scope}[scale=1]
                \begin{axis}[
                    anchor=origin,
                    x=0.1cm,
                    at={(0.7cm,0)},
                    style={samples=51,smooth,color=blue,line width=0.5mm},
                    hide axis
                ]
                \addplot[mark=none] {gauss(0,1)};
                \end{axis}
    
                \draw[dashed] (0.7, 0) -- (0.7, 4.7);
                
                \fill[black] (0.7, 0) circle (2pt) node[below] {$O(\sqrt{n})$};
    
                \draw[-, -{Latex[length=1.5mm, width=1mm]}] (1.2, 2.5) -- (0.8, 2.5);
                \draw[-, -{Latex[length=1.5mm, width=1mm]}] (0.2, 2.5) -- (0.6, 2.5);
                \node[right] at (0.9, 2.75) {$O(\sqrt[4]{n})$};
                \draw[densely dotted] (0.6, 2.5) -- (1,2.5);

                \begin{axis}[
                    anchor=origin,
                    x=0.4cm,
                    at={(7cm,0)},
                    style={samples=51,smooth,color=orange,line width=0.5mm},
                    hide axis,
                    scale=0.7
                ]
                \addplot[mark=none] {gauss(0,1)};
                \end{axis}
    
                \draw[-, -{Latex[length=1.5mm, width=1mm]}] ({1.8+5.95}, 1.5) -- ({1.4+5.95}, 1.5);
                \draw[densely dotted] ({0.9+5.5}, 1.5) -- ({1.5+5.95},1.5);
                \draw[-, -{Latex[length=1.5mm, width=1mm]}] ({0.6+5.65}, 1.5) -- ({1+5.65}, 1.5);
                \node[right] at ({1.4+5.95}, 1.75) {$O(\sqrt{n})$};
                
                \draw[dashed] ({1.2+5.8}, 0) -- ({1.2+5.8}, 3.25);
                
                \fill[black] ({1.2+5.8}, 0) circle (2pt) node[below] {$O(n)$};

                \fill[black] (9.5, 0) circle (2pt) node[below] {$n$};
                \draw[-{Latex[length=1.5mm, width=1mm]}] (0,0) -- (0, 5) node[above] {$p_{n,m}$};
                \draw[-{Latex[length=1.5mm, width=1mm]}] (0,0) -- (10, 0) node[right] {$m$};

    
                \fill[nearly transparent, blue] (0,-0.25) rectangle (1.8,0.25);
                \draw[blue,{Latex[length=1.5mm, width=1mm]}-{Latex[length=1.5mm, width=1mm]},densely dashdotted] (0,0.25) -- (1.8, 0.25) node[above] {$O(\sqrt{n})$};
    
                \fill[nearly transparent, orange] (0,-0.12) rectangle (9.6,0.12);
                \draw[orange,{Latex[length=1.5mm, width=1mm]}-{Latex[length=1.5mm, width=1mm]},densely dashdotted] (0,0.12) -- (9.6, 0.12) node[above left] {$O(n)$};
            \end{scope}
        \end{tikzpicture}
    \end{center}
    \caption{A schematic sketch of $m\mapsto p_{n,m}$. The graph on the left (blue) illustrates the case with strong asymmetry, while the graph on the right (orange) illustrates the case with weak asymmetry. The regimes studied in this paper are depicted on the $x$-axis as shaded regions.} \label{Fig_illustration of pnm}
\end{figure}

We now turn to the rare events and the asymptotic behaviour of their probabilities. For this, we denote  
\begin{equation}
    p_{n,m} := \P(\mathcal{N}_n = m)
\end{equation} 
for the probability that the eGinOE matrix has exactly $m$ real eigenvalues. 
When discussing tail probabilities, it is important to distinguish between the left and right tails. The left tail corresponds to the probability of having an exceptionally small number of real eigenvalues compared to the typical value, while the right tail corresponds to the probability of having an exceptionally large number.

\begin{itemize}
    \item We first consider the strong asymmetry regime with $\tau \in [0,1)$. 
    \begin{itemize}
        \item[--]  In the left tail, corresponding to the case  $m = O(\sqrt{n}/\log n)$, it was shown in \cite{BMS25} that  
        \begin{equation} \label{eq_left tail strong}
      \lim_{n \to \infty}  \frac{ \log p_{n,m} }{ \sqrt{n} } =  - \sqrt{\frac{1+\tau}{1-\tau}} \frac{1}{\sqrt{2\pi}} \zeta(\tfrac{3}{2}) \;,
        \end{equation}
        where $\zeta$ is the Riemann zeta function. The GinOE case 
        $(\tau=0)$ was previously obtained in \cite{KPTTZ15}.  
        \smallskip 
        \item[--]  In the right tail, we consider $m=n-O(1)$. 
        In this case, it was shown in \cite{ABL25} that  
        \begin{equation}  \label{eq_right tail strong}
         \lim_{n \to \infty}  \frac{ \log p_{n,m} }{ n^2 } =  -\frac{1}{4} \log\Big( \frac{2}{1+\tau} \Big). 
        \end{equation}
        The special cases $m=n-2$ and $m=n$ can also be found in \cite{Ed97,FN08,AK07}. Moreover, \cite{ABL25} provides the 
        precise correction terms beyond the leading order.  
    \end{itemize}
Note that the $O(n^2)$ decay in the right tail is significantly faster than the $O(\sqrt{n})$ decay in the left tail. This is intuitively natural, since the typical mean of $m$ is of order $O(\sqrt{n})$, which indicates that the distribution is more heavily skewed to the left, see Figure~\ref{Fig_illustration of pnm}.
\smallskip 
\item Next, we consider the weak asymmetry regime with $\tau = 1-\alpha^2/n$.  
\begin{itemize}
    \item[--] In the left tail, corresponding to the case $m=O(n/\log n)$, 
    it was shown in \cite{BMS25} that  
     \begin{equation}  \label{eq_left tail weak}
      \lim_{n \to \infty}  \frac{ \log p_{n,m} }{ n }  \leq \frac{2}{\pi} \int_0^1 \log( 1- e^{-\alpha^2 s^2}) \sqrt{1-s^2} \, ds\;,
    \end{equation}
   independently of $m$. It is conjectured that this inequality is in fact an equality, but due  to a technical issue in the proof of \cite{BMS25}, only the inequality  was established.  
    \smallskip 
    \item[--]  In the right tail, when $m=n-O(1)$, it was shown 
    in \cite{ABL25} that  
    \begin{equation}  \label{eq_right tail weak}
       \lim_{n \to \infty}  \frac{ \log p_{n,m} }{ n }  = -\frac{\alpha^2}{8}.  
    \end{equation}
     The precise corrections and asymptotic expansions were also obtained in \cite{ABL25}.  
 \end{itemize}
Note that, in contrast to the strong asymmetry regime, both the left and 
right tails here exhibit decay of order $O(n)$.  
\end{itemize}

From the literature discussed so far, the typical value and its Gaussian fluctuations are well understood, and the probabilities of observing atypically small numbers of real eigenvalues, such as $m = O(\mathbb{E}\mathcal{N}_n / \log n)$, or extremely large ones, $m = n - O(1)$, have also been investigated. In addition, in the strong asymmetry regime---most notably for the real Ginibre ensemble ($\tau = 0$)---even the macroscopic regime $m = O(n)$ has been partially analysed using Coulomb gas techniques \cite{MPTW16}, see also \cite{ATW14} and Remark~\ref{Rem_strong left right}. 
 
In contrast, two regimes remain largely unexplored: the intermediate regime $m = O(\sqrt{n})$ in the strongly asymmetric case, and the macroscopic regime $m = O(n)$ in the weakly asymmetric setting. In this work, we address both of these regimes, thereby extending the CLT scale and establishing a connection with the extreme large deviation regimes.

\section{Main results and discussions}

\subsection{Main results}

In this section, we present our main results concerning the asymptotic behaviour of $p_{n,m}$. We begin by recalling that, for $s \in \C$, the polylogarithm $\Li_s(z)$ is defined by  
\begin{equation} \label{def of polylog}
\Li_s(z) = \sum_{ k=1 }^\infty \frac{z^k}{k^s},
\end{equation}
where the series converges absolutely for $|z|<1$. The function $\Li_s(z)$ admits an analytic continuation to the complex $z$-plane, with a branch cut typically taken along $[1,\infty)$, see \cite[Section~25.12]{NIST} for details.

We are now ready to state our main results.

\begin{theorem}[\textbf{Moderate and large deviation probabilities}] \label{Thm_main} $ $
\begin{itemize}
    \item[\textup{(i)}] \textup{\textbf{(Strong asymmetry regime)}} Let $\tau \in [0,1)$ be fixed. 
    Suppose that $2m / \mathbb{E}\mathcal{N}_{2n} \to x \in (0,\infty)$ as $n \to \infty$.  
Then we have  
\begin{equation} \label{eqn for mod dev universal form}
\lim_{n\to\infty} \frac{ \log p_{2n,2m} }{ \mathbb{E} \mathcal{N}_{2n} }=   -\frac12  \sup_{u \in \R} \Big\{   xu +    \Li_{3/2}(1-e^u) \Big\}. 
\end{equation}
   Here, $\Li$ is the polylogarithm \eqref{def of polylog} and recall that $\mathbb{E} \mathcal{N}_{2n} \sim  \sqrt{2n} \sqrt{\frac{2}{\pi} \frac{1+\tau}{1-\tau}  } $ by \eqref{eq. mean and variance sH}.  
   \smallskip 
   \item[\textup{(ii)}] \textup{\textbf{(Weak asymmetry regime)}}   Let $   \tau = 1 - \frac{\alpha^2}{2n}$ with fixed $\alpha \in (0,\infty)$.
        Suppose that $m/n \to x \in (0,1)$ as $n\to\infty$.
        Then, we have
        \begin{equation} \label{eq. def phi wA rate function}
            \lim_{n\to\infty} \frac{ \log p_{2n,2m} }{2n}   =   -\frac12 \sup_{u \in \R} \bigg\{ xu - \frac{4}{\pi} \int_0^1 \log\Big( 1- (1-e^u)e^{-\alpha^2 s^2}\Big) \sqrt{1-s^2} \, ds \bigg\}. 
        \end{equation}
\end{itemize}
\end{theorem}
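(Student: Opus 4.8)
The plan is to exploit the exact integrable structure of the elliptic GinOE. Since the real eigenvalues of $X$ form a Pfaffian point process whose skew-orthogonal polynomials are (rescaled) Hermite polynomials, the probability generating function $E_{2n}(\xi):=\E[\xi^{\mathcal N_{2n}}]=\sum_{m\ge 0}p_{2n,2m}\,\xi^{2m}$ factorises into $n$ factors linear in $\xi^2$,
\begin{equation*}
E_{2n}(\xi)=\prod_{k=1}^{n}\bigl(1+\rho_k^{(n)}(\xi^2-1)\bigr),\qquad \rho_k^{(n)}\in[0,1],
\end{equation*}
the weights $\rho_k^{(n)}$ being explicit ratios of incomplete Gamma / Hermite integrals depending on $\tau$; equivalently $\mathcal N_{2n}$ is distributed as twice a sum of $n$ independent Bernoulli variables. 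This representation underlies \eqref{eq. mean and variance sH}--\eqref{eq. mean and variance wH} and is available from \cite{FN08,BKLL23}. Since $p_{2n,2m}=[\xi^{2m}]E_{2n}(\xi)$, the theorem reduces to a moderate/large deviation analysis of this explicit triangular array, which I would carry out in two main steps followed by an elementary unwinding.

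First I would determine the limiting spectral profile of the weights. In the strong asymmetry regime the $\rho_k^{(n)}$ pile up near $0$ with a tail of width of order $\sqrt n$, and I claim that $\tfrac1{\sqrt n}\sum_{k=1}^n\delta_{\rho_k^{(n)}}$ converges (vaguely on $(0,1]$) to the push-forward of Lebesgue measure on $[0,\infty)$ under $s\mapsto e^{-c_\tau^2 s^2}$, the constant $c_\tau$ being fixed by matching the first moment with \eqref{eq. mean and variance sH}; in the weak asymmetry regime $\tfrac1{n}\sum_{k=1}^n\delta_{\rho_k^{(n)}}$ converges to the push-forward of $\tfrac4\pi\sqrt{1-s^2}\,\mathbbm{1}_{(0,1)}(s)\,ds$ under $s\mapsto e^{-\alpha^2 s^2}$. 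Summing $\log(1+\rho_k^{(n)}(\xi^2-1))$ over $k$ and using the identity $\int_0^\infty\log(1-we^{-s^2})\,ds=-\tfrac{\sqrt\pi}{2}\Li_{3/2}(w)$, this yields, uniformly for $v$ in compact subsets of a complex neighbourhood of $\R$,
\begin{equation*}
\frac{\log E_{2n}(e^{v})}{\E\mathcal N_{2n}}\longrightarrow -\tfrac12\Li_{3/2}(1-e^{2v})=:\Lambda_\tau(v),\qquad
\frac{\log E_{2n}(e^{v})}{2n}\longrightarrow \tfrac2\pi\int_0^1\log\bigl(1-(1-e^{2v})e^{-\alpha^2 s^2}\bigr)\sqrt{1-s^2}\,ds=:\Lambda_\alpha(v),
\end{equation*}
in the strong and weak regimes respectively; the two are consistent, as $\Lambda_\alpha/c(\alpha)\to\Lambda_\tau$ when $\alpha\to\infty$. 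I would cross-check $\Lambda_\tau$ and $\Lambda_\alpha$ against \eqref{eq. mean and variance sH}--\eqref{eq. mean and variance wH} through their first two Taylor coefficients at $v=0$, and against the extreme left-tail constant $\tfrac12\zeta(\tfrac32)$ of \eqref{eq_left tail strong} as $v\to-\infty$.

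Next I would convert this into the stated point-probability asymptotics by a saddle-point (G\"artner--Ellis type) argument. Writing $p_{2n,2m}=\frac1{2\pi i}\oint_{|\xi|=\xi^\ast}E_{2n}(\xi)\,\xi^{-2m-1}\,d\xi$ and choosing $\xi^\ast>0$ to be the critical point of $\xi\mapsto\log E_{2n}(\xi)-2m\log\xi$, the nonnegativity of the coefficients of $E_{2n}$ gives $|E_{2n}(\xi^\ast e^{i\phi})|\le E_{2n}(\xi^\ast)$ with the integrand concentrated near $\phi\in\{0,\pi\}$ (a local limit theorem for $\mathcal N_{2n}$), so that
\begin{equation*}
\log p_{2n,2m}=\inf_{\xi>0}\bigl\{\log E_{2n}(\xi)-2m\log\xi\bigr\}+O(\log n).
\end{equation*}
The hypotheses $2m/\E\mathcal N_{2n}\to x\in(0,\infty)$, resp.\ $m/n\to x\in(0,1)$, keep $\xi^\ast$ in a compact subset of $(0,\infty)$ and make the local central limit theorem non-degenerate. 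Dividing by the relevant normalisation, invoking the uniform convergence of the previous step and using convexity to exchange $\inf$ with $\lim$, the right-hand side converges to $-\Lambda_\tau^\ast(x)=-\tfrac12\sup_{u\in\R}\{xu+\Li_{3/2}(1-e^u)\}$ after the substitution $u=2v$, which is \eqref{eqn for mod dev universal form}, and to the analogous expression \eqref{eq. def phi wA rate function} in the weak regime. Finally one checks that the supremum over $u$ is attained in the interior on the stated ranges, which follows from $\Lambda_\tau'$ being a strictly increasing bijection $\R\to(0,\infty)$ and $\Lambda_\alpha'$ a strictly increasing bijection $\R\to(0,1)$.

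The step I expect to be the main obstacle is the first one: the uniform asymptotics of the explicit weights $\rho_k^{(n)}$ in the index $k$ -- extracting the precise Gaussian profiles $e^{-c_\tau^2 s^2}$ (strong) and $e^{-\alpha^2 s^2}$ (weak), and controlling the delicate region ($k$ of order $\sqrt n$ in the strong regime, $k$ near the edge $n$ in the weak one) sharply enough to promote the convergence of $\log E_{2n}(e^v)$ from the real axis to a full complex neighbourhood, which is what makes the saddle-point estimate for the point mass $p_{2n,2m}$ (rather than merely the one-sided tail $\P(\mathcal N_{2n}\ge 2m)$) rigorous. A secondary, more routine issue is the interchange of supremum and limit in the Legendre step, which I would handle via the convexity and steepness of $\Lambda_\tau$ and $\Lambda_\alpha$ recorded above.
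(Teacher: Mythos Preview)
Your proposal follows essentially the same architecture as the paper: factorise the generating function via the Pfaffian/determinantal structure into $\prod_k(1+(z-1)\rho_k^{(n)})$ with $\rho_k^{(n)}\in(0,1)$, establish convergence of the rescaled $\log$-generating function to $\Psi_{\rm s}$ or $\Psi_{\rm w}$, and then upgrade this to point probabilities by tilting plus a local limit theorem for the Bernoulli sum (your saddle-point contour with radius $\xi^\ast$ is exactly the exponential tilt used in the paper's Theorem~\ref{Thm. zero dist and exp profile}, and both invoke a local CLT to kill the error). The paper packages the second step as a standalone result on exponential profiles of real-rooted polynomials with general speed $c_n$, extending \cite{JKM25} from $c_n=n$ to $c_n=o(n)$, but the mechanism is identical to what you describe.

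The one place your route is harder than necessary is Step~1. You propose to extract the limiting empirical profile of the individual weights $\rho_k^{(n)}$ (the Gaussian push-forwards you conjecture), and you correctly flag this as the main obstacle. The paper sidesteps this entirely: the $\rho_k^{(n)}$ are the eigenvalues of an explicit symmetric matrix $M_n$, and all that is needed for the $\log$-generating function limit is the trace asymptotics $\frac{1}{c_n}\Tr M_n^k\to$ (explicit), which were already proved in \cite{BMS25} without resolving individual eigenvalues. Your conjectured push-forward measures are correct (they are moment-determined by those trace limits), but you do not need to prove them---summing $\log(1+(z-1)\rho_k^{(n)})$ termwise via its Taylor expansion and passing through $\Tr M_n^k$ gives the limit $\Psi$ directly, with locally uniform extension to a complex half-plane following from a simple operator bound $|\Tr\log(I+(z-1)M_n)|\le c_\epsilon|z-1|\Tr M_n$ and Montel's theorem. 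This also resolves the ``full complex neighbourhood'' issue you raised at the end.
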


See Figure~\ref{Fig_numerics of pnm} for the numerical verification of Theorem~\ref{Thm_main}.

As previously mentioned, the same statement is expected to hold in the odd-dimensional case. However, several technical obstacles remain to be overcome; see the end of Section~\ref{sec. Exponential profiles} for further discussion.

\begin{figure}[b]
    \begin{subfigure}{4.6cm}
        \begin{center}
            \includegraphics[width=4.6cm]{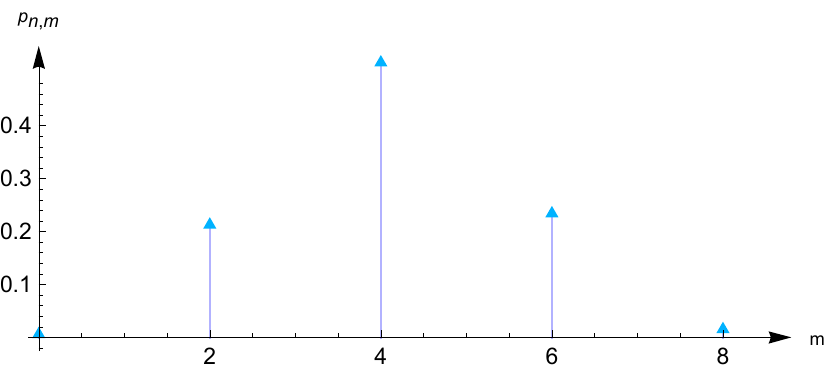}
        \end{center}
        \subcaption{ Strong; $n=8$ }
    \end{subfigure} 
   \begin{subfigure}{4.6cm}
        \begin{center}
            \includegraphics[width=4.6cm]{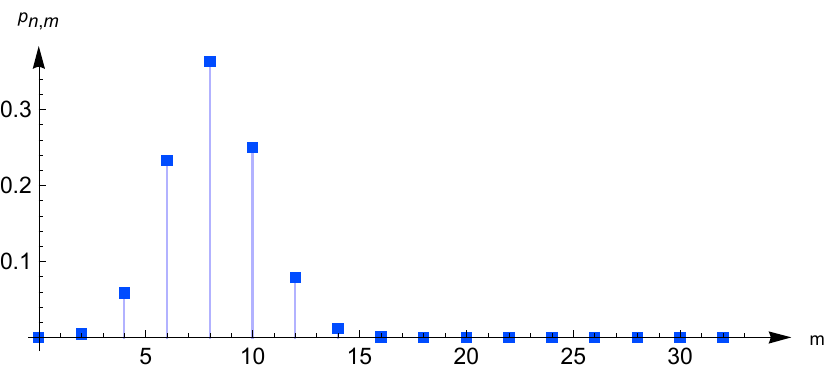}
        \end{center}
        \subcaption{ Strong; $n=32$ }
    \end{subfigure} 
    \begin{subfigure}{4.6cm}
        \begin{center}
            \includegraphics[width=4.6cm]{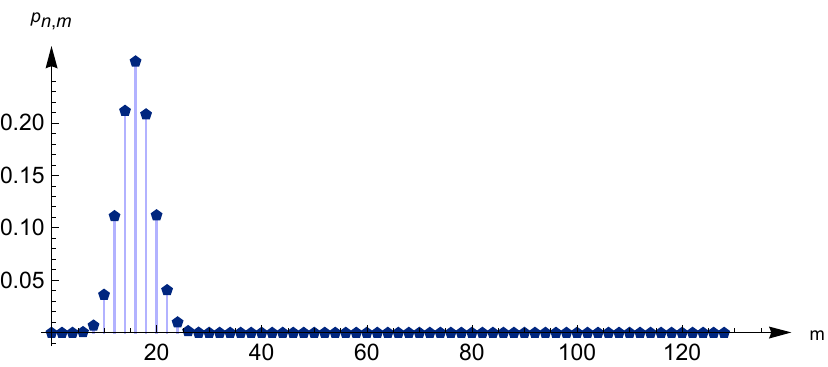}
        \end{center}
        \subcaption{ Strong; $n=128$ }
    \end{subfigure}

      \begin{subfigure}{4.6cm}
        \begin{center}
            \includegraphics[width=4.6cm]{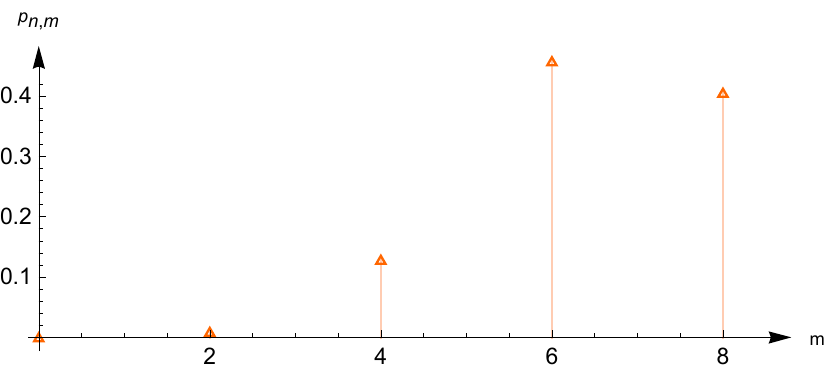}
        \end{center}
        \subcaption{ Weak; $n=8$ }
    \end{subfigure} 
   \begin{subfigure}{4.6cm}
        \begin{center}
            \includegraphics[width=4.6cm]{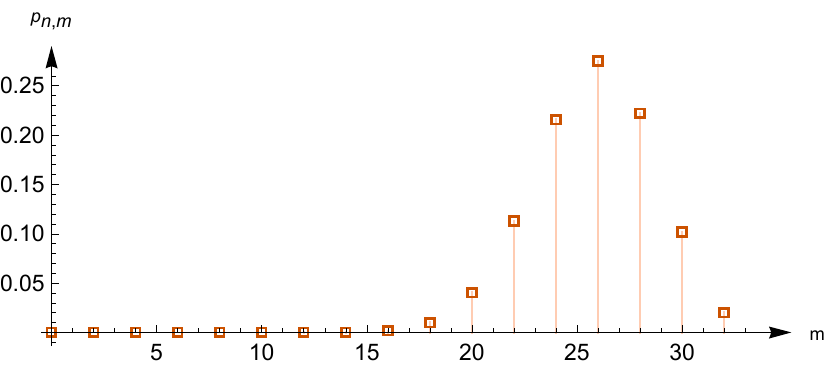}
        \end{center}
        \subcaption{ Weak; $n=32$ }
    \end{subfigure} 
    \begin{subfigure}{4.6cm}
        \begin{center}
            \includegraphics[width=4.6cm]{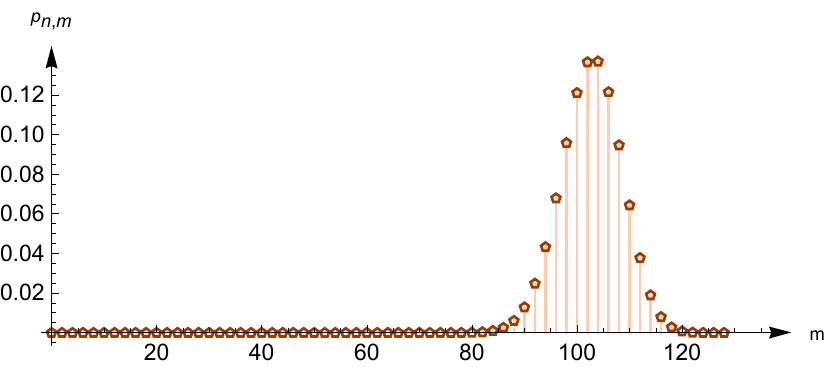}
        \end{center}
        \subcaption{ Weak; $n=128$ }
    \end{subfigure}
    
    \begin{subfigure}{6.8cm}
        \begin{center}
            \includegraphics[width=6.8cm]{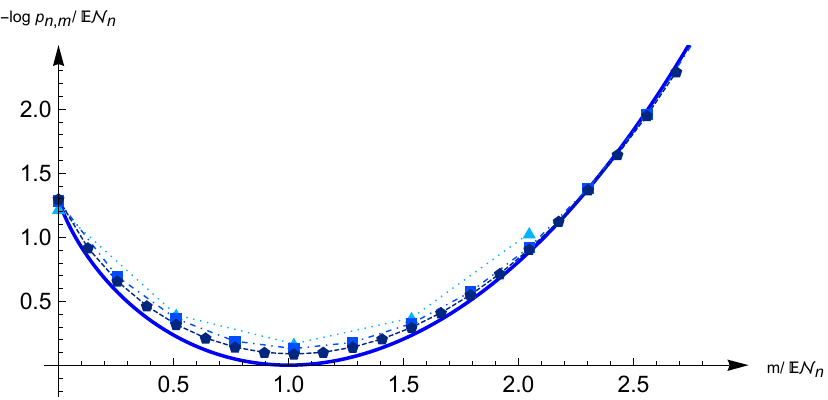}
        \end{center}
        \subcaption{ Strong }
    \end{subfigure}
    \qquad
    \begin{subfigure}{6.8cm}
        \begin{center}
            \includegraphics[width=6.8cm]{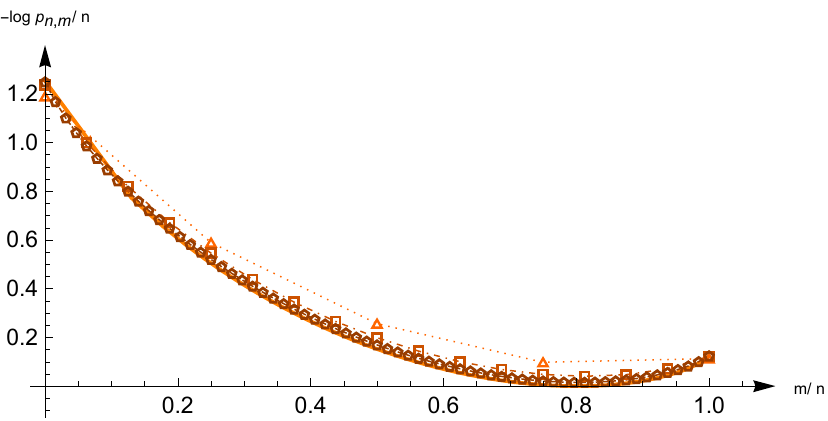}
        \end{center}
        \subcaption{ Weak  }
    \end{subfigure}
     \caption{Figures (a)--(c) show the numerical plots of $m \mapsto p_{n,m}$  in the strong asymmetry regime for $n = 8, 32, 128$, respectively. Figures (d)--(f) display the corresponding plots in the weak asymmetry regime.  Figures (g) and (h) present the graphs of $m \mapsto -\log p_{n,m}$ for both regimes (dotted line), together with their comparison to  $m \mapsto \mathbb{E}\mathcal{N}_n\,\phi_{\mathrm{s}}(m/\mathbb{E}\mathcal{N}_n)$ and $m \mapsto n\,\phi_{\mathrm{w}}(m/n)$ (full solid line).  
     Here, we set $\tau = 1/2$ for the strong asymmetry (blue) and $\alpha = 1$ for the weak asymmetry (orange). The formula~\eqref{eq for det formula of gen function} is used for the numerical evaluations. } \label{Fig_numerics of pnm}
\end{figure}

It is convenient to restate Theorem~\ref{Thm_main} for later discussions. For this, we write 
\begin{align} \label{eq. def Psi sA}
    \Psi_{\rm s}(z) & \equiv \Psi_{\rm s}(z;\tau) := -\sqrt{\frac{1+\tau}{1-\tau} \frac{1}{2\pi}}  \Li_{3/2}(1-z),
    \\
    \Psi_{\rm w}(z) & \equiv \Psi_{\rm w}(z; \alpha) := \frac{2}{\pi} \int_0^1 \log\Big( 1- (1-z)e^{-\alpha^2 s^2}\Big) \sqrt{1-s^2} \, ds \label{eq. def Psi wA}
\end{align}
and define 
\begin{align} \label{eq. def phi sA rate function}
 \phi_{\rm s}(x) \equiv \phi_{\rm s}(x;\tau):= \sup_{u \in \R} \Big\{ \frac{x}{2}u - \Psi_{\rm s}(e^u) \Big\},  \qquad  \phi_{\rm w}(x) \equiv\phi_{\rm w}(x;\alpha):=  \sup_{u \in \R} \Big\{ \frac{x}{2}u - \Psi_{\rm w}(e^u) \Big\}. 
\end{align}
Since the functions $\Psi_{\rm s}(e^u)$ and $\Psi_{\rm w}(e^u)$ are convex, the suprema in \eqref{eq. def phi sA rate function} are attained at the values of $u$ satisfying 
$$\frac{x}{2}= \frac{d}{du}\Psi_{\rm s}(e^u), \qquad \frac{x}{2}= \frac{d}{du}\Psi_{\rm w}(e^u),$$ 
respectively.
Therefore, it follows that
\begin{equation}
    \phi_{\rm s}(x) = -\Psi_{\rm s}(\Phi_{\rm s}^{-1}(\tfrac{x}{2})) + \tfrac{x}{2} \log \Phi_{\rm s}^{-1}(\tfrac{x}{2}), \qquad         \phi_{\rm w}(x) = - \Psi_{\rm w}(\Phi_{\rm w}^{-1}(\tfrac{x}{2})) + \tfrac{x}{2} \log \Phi_{\rm w}^{-1}(\tfrac{x}{2}) ,
\end{equation}
where 
\begin{align}
    \Phi_{\rm s}(z) \equiv \Phi_{\rm s}(z;\tau) &:= \frac{d}{du} \Psi_{\rm s}(e^u)\,\bigg|_{u=\log z} = \sqrt{\frac{1+\tau}{1-\tau} \frac{1}{2\pi}} \frac{z \Li_{1/2}(1-z)}{1-z}, \label{eq. def Phi sA}
    \\
 \label{eq. def Phi wA} 
        \Phi_{\rm w}(z) \equiv \Phi_{\rm w}(z;\alpha)  &:= \frac{d}{du} \Psi_{\rm w}(e^u)\,\bigg|_{u=\log z} = \frac{2}{\pi} \int_0^1 \frac{z \, e^{-\alpha^2 s^2}}{1- (1-z)e^{-\alpha^2 s^2}} \sqrt{1-s^2} \, ds.    
\end{align}
Now, Theorem~\ref{Thm_main} can be restated as follows: suppose $2m/\sqrt{2n} \to x \in (0,\infty)$ as $n \to \infty$ in the strong asymmetry regime, while $m/n \to x \in (0,1)$ as $n\to\infty$ in the weak asymmetry regime.
Then 
\begin{align}
  \lim_{n\to\infty} \frac{\log p_{2n,2m} }{\sqrt{2n}}   = -\phi_{\rm s}(x) & \quad \textup{for the strong asymmetry}, \label{eq. main restate SA}
  \\
    \lim_{n\to\infty} \frac{ \log p_{2n,2m} }{2n}   = - \phi_{\rm w}(x) & \quad \textup{for the weak asymmetry}. \label{eq. main restate WA}
\end{align}

The asymptotic behaviours of the rate functions $\phi_{\rm s}$ and $\phi_{\rm w}$ characterise how the deviation probabilities interpolate between the Gaussian fluctuation regime and the extreme large deviation regime, thereby clarifying the crossover structure between them. 
 The asymptotic behaviours of $\phi_{\rm s}$ as $x \to 0$ and $x \to \infty$ are given by \eqref{phi s 0} and \eqref{phi s infty}, respectively. Similarly, the asymptotic behaviours of $\phi_{\rm w}$ as $x \to 0$ and $x \to 1$ are given by \eqref{phi w x to 0} and \eqref{phi w x to 1}, respectively. These asymptotics provide natural matchings with the previous findings discussed in the previous section.

\subsection{Discussions} We now discuss several aspects and implications of Theorem~\ref{Thm_main}. 
 
\begin{rem}[Universal form in the strong asymmetry regime, cf. \cite{Fo25}]  
Notice that the right-hand side of \eqref{eqn for mod dev universal form} is independent of the choice of $\tau \in [0,1)$.  
This observation suggests a possible universality, which indeed extends beyond the present model.  
In a recent work by Forrester \cite[Proposition~3]{Fo25}, the analogous result for the spherical ensemble was established, where the same form as in \eqref{eqn for mod dev universal form} arises.  
In order to compare \eqref{eqn for mod dev universal form} and \cite[Proposition~3]{Fo25}, one needs the integral identity  
\begin{equation} \label{eq. integral rep of Polylog Li 3/2}
\Li_{3/2}(z)= -\frac{2}{\sqrt{\pi}} \int_0^\infty \log \Big( 1- z e^{-t^2} \Big) \,dt,
\end{equation}
which follows from the power series expansion of the logarithm on the right hand side for $|z|<1$, as well as the analytic continuation for $z \in \C \setminus [1,\infty)$.  
For the case $x = 0$ in \eqref{eqn for mod dev universal form}, corresponding to the extremal probability that there are no real eigenvalues, the possibility of such a universal form was discussed in \cite[Remark~1.4]{BMS25} and \cite[Section~3.3]{Fo25}. 

We conclude this remark on universality by noting that the same large deviation function $\phi_{\rm s}(x)$ in (\ref{eq. def phi sA rate function}) describes the current fluctuations in the simple symmetric exclusion process (SSEP), first computed in~\cite{DG09}, see Eq.~(2) there with $\rho_a = 0$ and $\rho_b = 1$. Note that a formula similar to Eq.~\eqref{eq_left tail strong}, specialized to the case $\tau=0$, has also appeared in very different contexts, such as in the nonequilibrium dynamics of the two-dimensional classical Ising model—specifically in the melting of a quadrant—and in the quenched dynamics of the quantum XXX chain \cite{KMS14,Ste17}. It is also interesting to note that a similar connection was found between the counting statistics in the complex Ginibre ensemble \cite{LGCCKMS19} and the current fluctuations for a Brownian gas of non-interacting particles. This points to further connections between Ginibre ensembles and the statistical mechanics particle systems, as previously noticed in \cite{GPTZ18} in the context of annihilating particle systems on the line.
\end{rem}

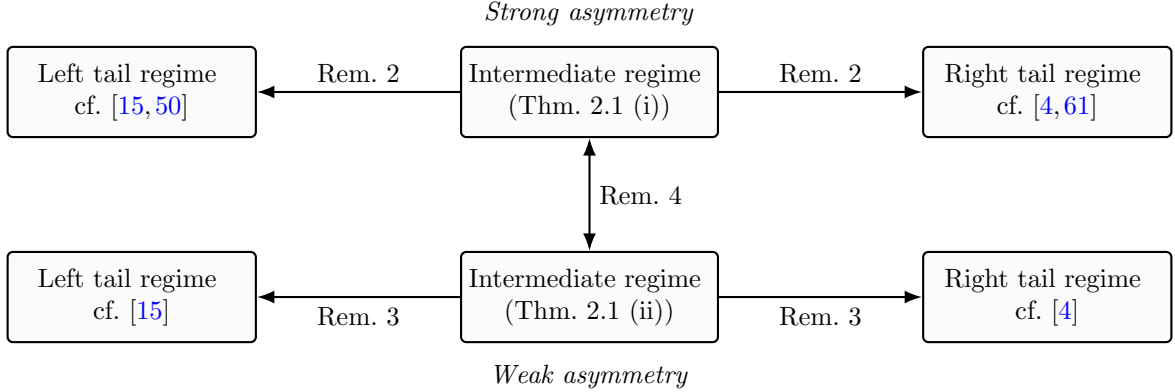
\begin{figure}
    \centering
    \begin{tikzpicture}[
  >=Latex, thick, 
  node distance = 15mm and 27mm,
  word/.style={
    draw=black,               
    rounded corners=2pt,   
    fill=gray!3,            
    minimum width=33mm,
    minimum height=12mm,
    inner sep=3pt,
    font=\normalsize,
    align=center, scale=0.89
  }
]

\node[word] (TL) {Left tail regime \smallskip \\ cf. \cite{KPTTZ15,BMS25}};
\node[word] (TC) [right=of TL] {Intermediate regime \smallskip \\ (Thm.~\ref{Thm_main} (i))};
\node[word] (TR) [right=of TC] {Right tail regime \smallskip \\ cf. \cite{MPTW16,ABL25}};

\node[word] (BL) [below=of TL] {Left tail regime \smallskip \\ cf. \cite{BMS25}};
\node[word] (BC) [below=of TC] {Intermediate regime \smallskip \\ (Thm.~\ref{Thm_main} (ii))};
\node[word] (BR) [below=of TR] {Right tail regime \smallskip \\ cf. \cite{ABL25}};
 
\draw[<->] (TC) -- node[midway,right] {Rem.~\ref{Rem_strong weak}} (BC);
 
\draw[->] (TC) -- node[midway,above] {Rem.~\ref{Rem_strong left right}} (TL);
\draw[->] (TC) -- node[midway,above] {Rem.~\ref{Rem_strong left right}} (TR);
 
\draw[->] (BC) -- node[midway,below] {Rem.~\ref{Rem_weak left right}} (BL);
\draw[->] (BC) -- node[midway,below] {Rem.~\ref{Rem_weak left right}} (BR);

\node[above=1.5mm of TC] {\textit{Strong asymmetry}};
\node[below=1.5mm of BC] {\textit{Weak asymmetry}};
\end{tikzpicture}
    \caption{The diagram illustrates the interrelations between our main results and the existing literature, the details of which can be found in Remarks~\ref{Rem_strong left right}, \ref{Rem_weak left right}, and~\ref{Rem_strong weak}.}
    \label{Fig_remarks on interrelations}
\end{figure}

Next, we discuss various limiting cases and interconnections between our main results and the existing literature.  
A summary is provided in Figure~\ref{Fig_remarks on interrelations}.

\begin{rem}[Extremal case; matching to the left and right tail large deviations in the strong asymmetry regime]  \label{Rem_strong left right}
Note that since $\Li_s(1)=\zeta(s)$, we have  
        \begin{equation} \label{phi s 0}
            \lim_{x\to 0+} \phi_{\rm s}(x) = - \Psi_{\rm s}(0) = \sqrt{\frac{1-\tau}{1+\tau} \frac{1}{2\pi}} \zeta(\tfrac{3}{2}). 
        \end{equation}  
Thus, heuristically, by taking the limit $x \to 0$ in our result \eqref{eq. main restate SA}, we can recover the previous finding \eqref{eq_left tail strong}.

On the other hand, the opposite limit $x \to \infty$ in \eqref{eq. main restate SA} is more delicate.
Clearly, one cannot expect to directly recover \eqref{eq_right tail strong}, due to the different scalings $O(\sqrt{n})$ and $O(n)$.
In the special case $\tau=0$, however, the Coulomb gas approach was used in \cite{MPTW16} to show that, for $\gamma \in (0,1)$,  
        \begin{equation*}
            \lim_{n\to\infty} \frac{\log p_{n, \gamma n}}{n^2} = - \mathcal{I}[\mu_\gamma],  
        \end{equation*}
   where $\mu_\gamma$ is the minimiser of the energy functional 
        \begin{equation*}
            \mathcal{I}[\mu] := \frac{1}{2} \bigg( \int |z|^2 \,d\mu(z) - \iint \log |z-w| \,d\mu(z) \, d\mu(w) \bigg) - \frac{3}{8} 
        \end{equation*}
among all probability measures that place mass $(1 - \gamma)$ on $\mathbb{C}\setminus \R$, and mass $\gamma$ on $\R$.
See also \cite{Fo25} for recent progress on this electrostatic approach; cf. \cite{BF25a}. 

Unlike in \cite{Fo25}, where the energy functional is explicitly evaluated for the spherical ensemble, no closed-form expression is available in the GinOE case. Consequently, a quantitative analysis of $\mathcal{I}[\mu_\gamma]$ remains challenging.  
Nevertheless, one may still expect a relation between the limit $x \to \infty$ in our formulation and the limit $\gamma \to 0$ in $\mathcal{I}[\mu_\gamma]$.  
Indeed, direct computations from \eqref{eq. def phi sA rate function} show that 
\begin{equation} \label{phi s infty}
\phi_{\rm s}(x) \sim  \frac{1-\tau}{1+\tau} \frac{\pi^2}{48} x^3, \qquad \textup{as }x \to \infty, 
\end{equation} 
where we used the asymptotic of the polylogarithm $\Li_{s}(\pm e^u) = - u^s/\Gamma(s+1) + o(u^s)$, as $\re u \to \infty$.
Although Theorem~\ref{Thm_main} is valid only when $x$ is kept fixed as $n\to\infty$, this heuristically suggests that
\begin{equation} \label{eq. log p asymp x to inf}
    \log p_{2n, 2m} \sim - \frac{1+\tau}{1-\tau} \frac{\pi^2}{48} x^3 \sqrt{2n} , \qquad \textup{as } \frac{2m}{\sqrt{2n}}= x \to \infty. 
\end{equation}
For the spherical ensemble, a similar cubic divergence of the rate function reproduces the $\gamma \to 0$ limit of $\mathcal{I}[\mu_\gamma]$, see \cite[Eq.~(3.12)]{Fo25}. We also mention that the opposite limit $\gamma \to 1$ should be read off from \eqref{eq_right tail strong}, leading to 
${\mathcal{I}}[\mu_{\gamma = 1}]=\frac14 \log ( \frac{2}{1+\tau}). $ 
\end{rem}

\begin{rem}[Extremal case; matching to the left and right tail large deviations in the weak asymmetry regime]   \label{Rem_weak left right}
In contrast to the strong asymmetry regime, in the weak asymmetry regime both extremal limits $x \to 0$ and $x \to 1$ in \eqref{eq. main restate WA} can be easily connected with previous findings.  
For the left-tail limit $x \to 0$, we have  
        \begin{equation} \label{phi w x to 0}
            \lim_{x \to 0+} \phi_{\rm w}(x) = -\Psi_{\rm w}(0) = -\frac{2}{\pi} \int_0^1 \log\Big( 1- e^{-\alpha^2 s^2}\Big) \sqrt{1-s^2} \, ds.
        \end{equation}
The right-hand side coincides with \eqref{eq_left tail weak}. By continuity, 
this provides stronger conjectural evidence that \eqref{eq_left tail weak} in 
fact holds as an equality.  
For the right-tail limit $x \to 1$, we obtain  
        \begin{align} \label{phi w x to 1}
            \lim_{x\to1-}\phi_{\rm w}(x)  =  \sup_{u \in \R} \Big\{ \frac{u}{2} - \Psi_{\rm w}(e^u) \Big\} = \frac{\alpha^2}{8},
        \end{align}
       which coincides with \eqref{eq_right tail weak}.  
The last equality follows from the observation that  
\begin{align*}
\frac{d}{du}\Big(\frac{u}{2} - \Psi_{\rm w}(e^u) \Big)  = \frac{1}{2} - \frac{2}{\pi} \int_0^1 \frac{e^{u-\alpha^2 s^2}}{e^{u - \alpha^2 s^2}+ 1 - e^{-\alpha^2 s^2}} \sqrt{1-s^2} \,ds
 > \frac{1}{2} - \frac{2}{\pi} \int_0^1 \sqrt{1-s^2} \,ds = 0,
\end{align*}
which leads to 
\begin{align*}
    \sup_{u \in \R} \Big\{ \frac{u}{2} - \Psi_{\rm w}(e^u) \Big\}  = \lim_{u \to \infty} \Big( \frac{u}{2} - \Psi_{\rm w}(e^u) \Big) =  \frac{u}{2} - \frac{2}{\pi} \int_0^1 (u - \alpha^2 s^2) \sqrt{1-s^2} \,ds = \frac{\alpha^2}{8}.
\end{align*}
We stress that Theorem~\ref{Thm_main} is valid only when $x$ is kept fixed as $n\to\infty$. Therefore, these computations do not constitute a rigorous proof of \eqref{eq_left tail weak} and \eqref{eq_right tail weak}.
\end{rem}


By definition, the rate functions in the moderate deviation probabilities are closely connected with the typical expected value. Specifically, the expected value is recovered from the minimiser of the rate function, while the second-order correction at this point, the curvature of the rate function at the critical point, is related to the variance. Using our closed formulas, we can verify these facts, which we formulate in the following proposition.

\begin{proposition}[\textbf{Minimiser and curvature of the rate functions}] \label{Prop_minimisers and curvature} Let $\phi_{\rm s}$ and $\phi_{\rm w}$ denote the rate functions in \eqref{eq. def phi sA rate function}. 
\begin{itemize}
    \item The minima of the rate functions $\phi_{\rm s}$ and $\phi_{\rm w}$ 
    occur at the limiting expected value of $\mathcal{N}_{2n}$ (normalised by 
    the typical scale). In other words, if $x_{\rm s}$ and $x_{\rm w}$ denote 
    the unique minimisers of $\phi_{\rm s}$ and $\phi_{\rm w}$, then  
 \begin{equation} \label{def of xs and xw}
        x_{\rm s}  = \sqrt{\frac{1+\tau}{1-\tau} \frac{2}{\pi}} = \lim_{n\to\infty} \frac{\mathbb{E}\mathcal{N}_{2n}}{\sqrt{2n}}, \qquad    x_{\rm w}   = c(\alpha) = \lim_{n\to \infty} \frac{\mathbb{E} \mathcal{N}_{2n}}{2n},
    \end{equation} 
    \item The curvatures of the rate functions $\phi_{\rm s}$ and $\phi_{\rm w}$ 
    at their minima are the reciprocal of the variance of $\mathcal{N}_{2n}$ 
    (normalised by the typical scale). More precisely,   
        \begin{align}
        \phi_{\rm s}''(x_{\rm s}) &= \frac{1}{(2-\sqrt{2}) \sqrt{\frac{1+\tau}{1-\tau} \frac{2}{\pi}} } = \lim_{n\to\infty}\frac{\sqrt{2n}}{\textup{Var} \mathcal{N}_{2n}}, 
        \\
           \phi_{\rm w}''(x_{\rm w})& =  \frac{1}{2 (c(\alpha) - c(\sqrt{2}\alpha) )} = \lim_{n \to \infty} \frac{2n}{\textup{Var} \mathcal{N}_{2n}}.
    \end{align}
\end{itemize}
\end{proposition}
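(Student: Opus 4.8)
The plan is to treat both $\phi_{\rm s}$ and $\phi_{\rm w}$ as Legendre--Fenchel transforms and to read off the minimiser and the curvature from the dual function evaluated at the origin. Set $g_{\rm s}(u):=\Psi_{\rm s}(e^u)$ and $g_{\rm w}(u):=\Psi_{\rm w}(e^u)$, so that $\phi_\bullet(x)=\sup_{u\in\R}\{\tfrac{x}{2}u-g_\bullet(u)\}$ for $\bullet\in\{{\rm s},{\rm w}\}$ by \eqref{eq. def phi sA rate function}. Using the integral representation \eqref{eq. integral rep of Polylog Li 3/2} for $\Psi_{\rm s}$ and the formula \eqref{eq. def Psi wA} for $\Psi_{\rm w}$, each $g_\bullet$ can be written as an integral, against a positive measure, of functions of the form $u\mapsto\log\big(1+(e^u-1)a\big)$ with $a\in(0,1]$, whose second derivative in $u$ equals $a(1-a)e^u/(1-a+ae^u)^2\ge 0$ and is strictly positive for $a<1$; hence each $g_\bullet$ is smooth and strictly convex on $\R$. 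Combined with the behaviour of $g_\bullet'$ at $\pm\infty$ — its range being exactly the set of admissible values $x/2$ in Theorem~\ref{Thm_main} — this yields, for each such $x$, a unique interior maximiser $u^*(x)$ characterised by $g_\bullet'(u^*(x))=x/2$, and, by the envelope theorem together with differentiation of this relation,
\[
\phi_\bullet'(x)=\tfrac12\,u^*(x),\qquad \phi_\bullet''(x)=\frac{1}{4\,g_\bullet''(u^*(x))}.
\]
In particular $\phi_\bullet$ is strictly convex, its unique minimiser $x_\bullet$ is the value with $u^*(x_\bullet)=0$, i.e.\ $x_\bullet=2g_\bullet'(0)$, and $\phi_\bullet''(x_\bullet)=1/(4g_\bullet''(0))$. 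Since $g_\bullet'(0)=\Psi_\bullet'(1)$ and $g_\bullet''(0)=\Psi_\bullet'(1)+\Psi_\bullet''(1)$ by the chain rule, the whole statement reduces to computing $\Psi_\bullet'(1)$ and $\Psi_\bullet''(1)$.

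In the strong asymmetry regime I would apply $\frac{d}{dz}\Li_s(z)=\Li_{s-1}(z)/z$ twice to \eqref{eq. def Psi sA}, obtaining, with $C:=\sqrt{\tfrac{1+\tau}{1-\tau}\tfrac{1}{2\pi}}$,
\[
\Psi_{\rm s}'(z)=C\,\frac{\Li_{1/2}(1-z)}{1-z},\qquad \Psi_{\rm s}''(z)=-C\,\frac{\Li_{-1/2}(1-z)-\Li_{1/2}(1-z)}{(1-z)^2}.
\]
Letting $z\to1$ and inserting the expansion $\Li_s(w)=w+2^{-s}w^2+O(w^3)$ as $w\to0$ gives $\Psi_{\rm s}'(1)=C$ and $\Psi_{\rm s}''(1)=-C/\sqrt2$. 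Hence $x_{\rm s}=2C=\sqrt{\tfrac{1+\tau}{1-\tau}\tfrac{2}{\pi}}$, and since $g_{\rm s}''(0)=C(1-\tfrac{1}{\sqrt2})$ one has $\phi_{\rm s}''(x_{\rm s})=\tfrac{1}{4C(1-1/\sqrt2)}=\tfrac{1}{(2-\sqrt2)\,x_{\rm s}}$. Comparing with \eqref{eq. mean and variance sH} applied to matrix dimension $2n$ identifies these with $\lim_{n\to\infty}\E\mathcal N_{2n}/\sqrt{2n}$ and $\lim_{n\to\infty}\sqrt{2n}/\textup{Var}\,\mathcal N_{2n}$.

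In the weak asymmetry regime I would differentiate \eqref{eq. def Psi wA} under the integral sign; writing $q_s:=e^{-\alpha^2 s^2}$,
\[
\Psi_{\rm w}'(z)=\frac{2}{\pi}\int_0^1\frac{q_s}{1-(1-z)q_s}\sqrt{1-s^2}\,ds,\qquad \Psi_{\rm w}''(z)=-\frac{2}{\pi}\int_0^1\frac{q_s^2}{(1-(1-z)q_s)^2}\sqrt{1-s^2}\,ds,
\]
so at $z=1$ the denominators collapse to $1$ and $\Psi_{\rm w}'(1)=\tfrac{2}{\pi}\int_0^1 e^{-\alpha^2 s^2}\sqrt{1-s^2}\,ds$, $\Psi_{\rm w}''(1)=-\tfrac{2}{\pi}\int_0^1 e^{-2\alpha^2 s^2}\sqrt{1-s^2}\,ds$. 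The key computational input is the identity
\[
\int_0^1 e^{-a s^2}\sqrt{1-s^2}\,ds=\frac{\pi}{4}\,e^{-a/2}\Big(I_0\big(\tfrac a2\big)+I_1\big(\tfrac a2\big)\Big),
\]
which I would prove by substituting $s=\sin\theta$, applying the double-angle formulas $\sin^2\theta=\tfrac12(1-\cos2\theta)$ and $\cos^2\theta=\tfrac12(1+\cos2\theta)$, and using $\pi I_0(b)=\int_0^\pi e^{b\cos\phi}\,d\phi$, $\pi I_1(b)=\int_0^\pi e^{b\cos\phi}\cos\phi\,d\phi$. Taking $a=\alpha^2$ and $a=2\alpha^2$ and recalling \eqref{eq. coeff avg number} then yields $\Psi_{\rm w}'(1)=\tfrac12 c(\alpha)$ and $\Psi_{\rm w}''(1)=-\tfrac12 c(\sqrt2\alpha)$, so $x_{\rm w}=c(\alpha)$ and $g_{\rm w}''(0)=\tfrac12(c(\alpha)-c(\sqrt2\alpha))$, whence $\phi_{\rm w}''(x_{\rm w})=1/\big(2(c(\alpha)-c(\sqrt2\alpha))\big)$. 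Comparing with \eqref{eq. mean and variance wH} applied to matrix dimension $2n$ with $\tau=1-\alpha^2/(2n)$ identifies these with $\lim_{n\to\infty}\E\mathcal N_{2n}/(2n)$ and $\lim_{n\to\infty} 2n/\textup{Var}\,\mathcal N_{2n}$.

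The only step requiring genuine care is the first paragraph: establishing strict convexity of $g_\bullet$, pinning down the range of $g_\bullet'$, and thereby justifying the differentiated Legendre-duality formulas, after which the proof is a direct computation. Strict convexity is, however, immediate from the $\log(1+(e^u-1)a)$ representation above, and it is in any case built into the derivation of Theorem~\ref{Thm_main}, where $\Psi_\bullet(e^u)$ arises as the limiting scaled cumulant generating function of $\mathcal N_{2n}$; the remaining nontrivial ingredient, the Bessel integral identity, is classical and elementary.
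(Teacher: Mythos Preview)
Your proposal is correct and follows essentially the same approach as the paper: both recognise $\phi_\bullet$ as the Legendre transform of $u\mapsto\Psi_\bullet(e^u)$, use the standard duality relations $(f^*)'=(f')^{-1}$ and $(f^*)''=1/f''\circ(f')^{-1}$ to reduce the problem to evaluating the first two derivatives of $\Psi_\bullet(e^u)$ at $u=0$, and then carry out these computations directly. Your version is slightly more self-contained in that you justify strict convexity via the $\log(1+(e^u-1)a)$ integral representation and sketch the Bessel integral identity, whereas the paper quotes the latter from a reference; but the structure and all computations are the same.
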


This proposition will be shown in Subsection~\ref{Subsection_proof of Props}. 

\begin{rem}[Interpolating properties of the rate functions]   \label{Rem_strong weak}
A notable feature of the weak asymmetry regime is that it connects naturally with the strong asymmetry regime when the parameters are chosen appropriately. This relationship has been studied extensively in the literature (see, e.g. \cite[Section~2.1]{BKLL23} and \cite[Remark 1.4]{BMS25}), and it also holds in our present formulation.
Specifically, suppose $m/\E\mathcal{N}_{2n} \to y$ as $n \to \infty$.
By \eqref{eq. mean and variance sH}, \eqref{eq. mean and variance wH}, \eqref{def of xs and xw} and Theorem~\ref{Thm_main}, we have
\begin{equation} \label{eq. renormalised rate function}
    \lim_{n\to\infty}\frac{\log p_{2n, 2m}}{\E \mathcal{N}_{2n}} = \begin{dcases}
        -\frac{\phi_{\rm s}(x_{\rm s}y)}{ x_{\rm s} } & \quad \textup{for the strong asymmetry},
        \\
        -\frac{\phi_{\rm w}(x_{\rm w} y)}{ x_{\rm w} } & \quad \textup{for the weak asymmetry}.
    \end{dcases}
\end{equation}
Then, assuming $1 \ll \alpha \ll \sqrt{2n}$ as $n\to\infty$, the (renormalised) rate function in \eqref{eq. renormalised rate function} for the weak asymmetry coincides with that for the strong asymmetry in the limit $\alpha \to \infty$. Indeed, this is a consequence of the following observation.

For the strong asymmetry, it follows from straightforward computations using the definition \eqref{eq. def phi sA rate function} that
\begin{align} \label{eq. renormalised Psi Phi sA}
    \frac{\Psi_{\rm s}(x)}{ x_{\rm s} } = - \frac{\Li_{3/2}(1-x)}{2},
    \qquad
    \frac{\Phi_{\rm s}(x)}{ x_{\rm s} }  =  \frac{x \Li_{1/2}(1-x)}{2(1-x)}.
\end{align}
For the weak asymmetry, we suppose $1 \ll \alpha \ll \sqrt{2n}$ as $n \to \infty$. Then the leading contribution of the integral in \eqref{eq. def Psi wA} comes from small $s\ll 1$. Thus, we have
\begin{align*}
    \Psi_{\rm w}(x) = \frac{2}{\pi} \int_0^1 \log\Big( 1 - (1-x) e^{-\alpha^2 s^2}\Big)  \,ds  \sim  \frac{2}{\pi\alpha} \int_0^\infty \log\Big( 1 - (1-x) e^{-t^2}\Big)  \,dt  = - \frac{1}{\sqrt{\pi \alpha^2}} \Li_{3/2}(1-x) ,
\end{align*} 
as $\alpha \to \infty$.
Similar computations give rise to
\begin{align}
    \Phi_{\rm w}(x) \sim \frac{2}{\pi \alpha} \int_0^\infty \frac{x e^{-t^2}}{1 - (1-x) e^{-t^2}}  \,dt 
    = \frac{1}{\sqrt{\pi \alpha^2}} \frac{x \Li_{1/2}(1-x)}{1-x} .
\end{align}
On the other hand, using $I_\nu(x) = e^x/\sqrt{2\pi x} \,(1+o(1))$ as $x \to \infty$, we have $c(\alpha) = 2/\sqrt{\pi \alpha^2} \,(1+o(1))$ as $\alpha\to\infty$.
Hence, putting the above asymptotics together, we obtain
\begin{align} \label{eq. renormalised Psi Phi wA}
    \lim_{\alpha \to \infty}\frac{\Psi_{\rm w}(x)}{x_{\rm w}} = - \frac{\Li_{3/2}(1-x)}{2}, \qquad \lim_{\alpha \to \infty}\frac{\Phi_{\rm w}(x)}{x_{\rm w}} = \frac{x\Li_{1/2}(1-x)}{2(1-x)}.
\end{align}
These coincide with \eqref{eq. renormalised Psi Phi sA}, implying that the renormalised rate function in \eqref{eq. renormalised rate function} for the weak asymmetry regime matches that for the strong asymmetry regime in the limit $\alpha \to \infty$.
\end{rem}

Our next result concerns the asymptotic behaviour of the generating function of $p_{2n,2k}$ and the cumulants of $\mathcal{N}_{2n}$.  
The following proposition is indeed a key step in the proofs of our main results, while also being of independent interest.  
To state it, we recall that the Stirling number of the second kind $S(n,k)$ is defined by  
\begin{equation*}
S(n,k) = \frac{1}{k!} \sum_{j=0}^k (-1)^{k-j} \binom{k}{j} j^n,
\end{equation*}
see \cite[Section 26.8]{NIST}.
We also recall that for a random variable $X$, the $\ell$-th cumulant $\kappa_\ell(X)$ is defined by
\begin{equation*}
    \kappa_\ell(X) = \frac{d^\ell}{dt^\ell} \log \mathbb{E}[e^{t X}]\Big|_{t=0}.
\end{equation*} 

\begin{proposition}[\textbf{Asymptotic behaviour of the generating function and cumulants}]\label{Prop_generating function limit} $ $ 
\begin{itemize}
    \item[\textup {(i)}] \textbf{\textup{(Strong asymmetry regime)}}  Let $\tau \in [0,1)$ be fixed. Then we have 
    \begin{equation}  \label{eq. limit of gen func sH}
        \lim_{n\to\infty} \frac{1}{\sqrt{2n}} \log\Big( \sum_{k=0}^n z^k p_{2n,2k} \Big) = \Psi_{\rm s}(z), 
    \end{equation} 
   locally uniformly in $\C \setminus (-\infty, 0]$, where $\Psi_{\rm s}$ is given by \eqref{eq. def Psi sA}. 
    Furthermore for any $\ell \in \mathbb{N}_0$, we have 
    \begin{equation} \label{cumulant lims sH_v2}
        \lim_{n\to\infty} \frac{\kappa_\ell(\mathcal{N}_{2n}) }{\sqrt{2n}} = 2^\ell \sum_{m=1}^\ell (-1)^{m+1}  (m-1)! S(\ell,m) \sqrt{\frac{1+\tau}{1-\tau} \frac{1}{2\pi m}} . 
    \end{equation} 
    \item[\textup {(ii)}] \textbf{\textup{(Weak asymmetry regime)}}   Let $   \tau = 1 - \frac{\alpha^2}{2n}$ with fixed $\alpha \in (0,\infty)$. Then we have 
      \begin{equation} \label{eq. limit of gen func wH}
        \lim_{n\to\infty} \frac{1}{2n} \log\Big( \sum_{k=0}^n z^k p_{2n,2k} \Big) = \Psi_{\rm w}(z), 
    \end{equation}
    locally uniformly in $\C \setminus (-\infty, 0]$, where $\Psi_{\rm w}$ is given by \eqref{eq. def Psi wA}. 
    Furthermore for any $\ell \in \mathbb{N}_0$, we have  
    \begin{equation} \label{cumulant lims wH_v2}
        \lim_{n\to\infty} \frac{\kappa_\ell( \mathcal{N}_{2n} )}{2n} =  2^{\ell} \sum_{m=1}^\ell (-1)^{m+1} (m-1)! S(\ell,m) \frac{ c(\sqrt{m}\alpha) }{2},
    \end{equation}
    where $c(\alpha)$ is given by \eqref{eq. coeff avg number}.
\end{itemize}
\end{proposition}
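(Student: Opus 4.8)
The plan is to work from the exact representation of the generating function $Z_{2n}(z):=\sum_{k=0}^{n} z^{k} p_{2n,2k}$. First I would recall (or, if it has not yet been derived, establish via the Pfaffian point‑process structure of the real eigenvalues of the eGinOE together with its skew‑orthogonal polynomials, cf. \cite{FN08,BKLL23}) the exact formula \eqref{eq for det formula of gen function}, which exhibits $Z_{2n}(z)$ as a product of $n$ affine‑in‑$z$ factors, $Z_{2n}(z)=\prod_{j=1}^{n}\big(1-(1-z)\,\omega_{j,n}\big)$, with $0\le\omega_{j,n}\le 1$ given explicitly through ratios of (incomplete) gamma functions / error functions attached to the $j$‑th skew‑orthogonal polynomial. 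Taking logarithms reduces both \eqref{eq. limit of gen func sH} and \eqref{eq. limit of gen func wH} to the asymptotic evaluation of $\sum_{j=1}^{n}\log\big(1-(1-z)\omega_{j,n}\big)$.

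The heart of the argument is then a Riemann‑sum / Laplace‑type analysis of this sum. In the strong asymmetry regime the relevant degree window is $j=O(\sqrt{2n})$: there $\omega_{j,n}$ converges, after the rescaling $j=u\sqrt{2n}$, to a Gaussian profile $\omega_{j,n}\to e^{-\gamma_\tau u^2}$ (uniformly on compacts in $u$, with $\gamma_\tau$ explicit), while for $j$ outside this window $\omega_{j,n}$ decays fast enough that the tail contribution to the sum is negligible. A dominated‑convergence argument then yields $\tfrac{1}{\sqrt{2n}}\sum_{j}\log\big(1-(1-z)\omega_{j,n}\big)\to \beta_\tau^{-1}\int_0^\infty\log\big(1-(1-z)e^{-t^{2}}\big)\,dt$ for the appropriate constant $\beta_\tau$, and by the integral representation \eqref{eq. integral rep of Polylog Li 3/2} the right‑hand side is exactly $\Psi_{\rm s}(z)$ in \eqref{eq. def Psi sA}. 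In the weak asymmetry regime the window is macroscopic, $j=O(2n)$: the modes equidistribute on $[0,1]$ with the (folded, normalised) semicircle density $\tfrac{4}{\pi}\sqrt{1-s^{2}}$ and $\omega_{j,n}\to e^{-\alpha^{2}s_j^{2}}$, so the Riemann sum converges directly to $\Psi_{\rm w}(z)$ in \eqref{eq. def Psi wA}. Upgrading pointwise to locally uniform convergence on $\C\setminus(-\infty,0]$ is routine: each $\tfrac{1}{\sqrt{2n}}\log Z_{2n}$ is holomorphic there (the zeros of $Z_{2n}$ lie on $(-\infty,0)$), one has a uniform local bound from the explicit factors, and Vitali/Montel applies, the limit being holomorphic and agreeing with $\Psi_{\rm s}$ (resp.\ $\Psi_{\rm w}$) by the pointwise identification.

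Given the generating‑function limits, the cumulant asymptotics follow cleanly. Since $\mathcal N_{2n}$ is even‑valued, its cumulant generating function is $t\mapsto\log Z_{2n}(e^{2t})$, and $e^{2t}$ stays in $\C\setminus(-\infty,0]$ for $t$ near $0$; locally uniform convergence of holomorphic functions gives convergence of all derivatives, hence $\kappa_\ell(\mathcal N_{2n})/\sqrt{2n}\to \tfrac{d^\ell}{dt^\ell}\Psi_{\rm s}(e^{2t})\big|_{t=0}$ (and likewise with $2n$ and $\Psi_{\rm w}$; the case $\ell=0$ is trivial). It remains to compute this derivative. Expanding $\Psi_{\rm s}(e^{2t})=-\sqrt{\tfrac{1+\tau}{1-\tau}\tfrac{1}{2\pi}}\sum_{m\ge1}\frac{(1-e^{2t})^{m}}{m^{3/2}}$ and using the classical identity $(e^{2t}-1)^{m}=m!\sum_{\ell\ge m}S(\ell,m)\frac{(2t)^{\ell}}{\ell!}$, the coefficient of $\frac{(2t)^{\ell}}{\ell!}$ is exactly $2^{-\ell}$ times the right‑hand side of \eqref{cumulant lims sH_v2}; for the weak regime the same manipulation applied to $\Psi_{\rm w}(e^{2t})=-\frac{2}{\pi}\sum_{m\ge1}\frac{(1-e^{2t})^{m}}{m}\int_0^1 e^{-m\alpha^{2}s^{2}}\sqrt{1-s^{2}}\,ds$, together with the Bessel integral identity $\frac{4}{\pi}\int_0^1 e^{-\beta^{2}s^{2}}\sqrt{1-s^{2}}\,ds=e^{-\beta^{2}/2}\big(I_0(\tfrac{\beta^{2}}{2})+I_1(\tfrac{\beta^{2}}{2})\big)=c(\beta)$ applied with $\beta=\sqrt{m}\,\alpha$, produces \eqref{cumulant lims wH_v2}.

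The main obstacle is the asymptotic analysis of the second paragraph: obtaining $\lim_n \tfrac{1}{\sqrt{2n}}\sum_{j}\log\big(1-(1-z)\omega_{j,n}\big)$ uniformly over $j$ and over $z$ in compact subsets of $\C\setminus(-\infty,0]$. This needs (i) uniform asymptotics of the (incomplete) gamma / error functions entering $\omega_{j,n}$ throughout the transition region, (ii) an integrable dominating bound surviving the logarithmic singularity of $\log(1-(1-z)\omega)$ as $\omega\to1$ (small $j$), and (iii) control of the many individually negligible tail terms ($j$ outside the scaling window). Pinning down the multiplicative constants — so that the limit is $\Psi_{\rm s}$ with precisely the coefficient $\sqrt{\tfrac{1+\tau}{1-\tau}\tfrac{1}{2\pi}}$, resp.\ $\Psi_{\rm w}$ with the semicircle weight — is where the bookkeeping is most delicate. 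By contrast, once the generating‑function limits are in hand, the passage to cumulants and the combinatorial identification with the Stirling‑number expressions are elementary.
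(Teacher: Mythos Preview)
Your high-level strategy---write $Z_{2n}(z)=\det[I_n+(z-1)M_n]=\prod_j(1-(1-z)\omega_{j,n})$, analyse the log-sum, then pass to cumulants via holomorphic convergence and the Stirling-number identity---is sound in outline, and the cumulant step is essentially what the paper does. The gap is in your claimed input for the Riemann-sum step: the $\omega_{j,n}$ are the \emph{eigenvalues of the matrix $M_n$}, and for general $\tau$ these are \emph{not} known in closed form as ``ratios of incomplete gamma/error functions attached to the $j$-th skew-orthogonal polynomial''. The entries of $M_n$ are explicit Hermite integrals, but its spectrum is not, so your asserted pointwise asymptotics $\omega_{j,n}\to e^{-\gamma_\tau u^2}$ (strong) and $\omega_{j,n}\to e^{-\alpha^2 s_j^2}$ with semicircle density (weak) are themselves nontrivial statements that you would have to prove from scratch. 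They are consistent with the known trace asymptotics, but establishing them at the level of individual eigenvalues is a harder problem than the proposition you are trying to prove.

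The paper avoids this by working only with the traces $\Tr(M_n^k)$, whose limits \eqref{eq. trace power limit sH}--\eqref{eq. trace power limit wH} are already available from \cite{BMS25}. For (i) it simply cites \cite[Theorem~1.5]{BMS25} for convergence on $\{|z-1|<1\}$, then uses the elementary bound $|\Tr\log[I_n+(z-1)M_n]|\le c_\epsilon|z-1|\Tr M_n$ to get local boundedness, and upgrades via Montel plus the identity theorem to all of $\C\setminus(-\infty,0]$---exactly the normal-families step you describe, but with no eigenvalue analysis needed. For (ii) it expands $\log Z_{2n}(z)=-\sum_{k\ge1}\frac{(1-z)^k}{k}\Tr(M_n^k)$ and sandwiches: truncating at $k_0$ gives the upper bound directly from positivity of the tail, while the lower bound uses the crude estimate $\Tr(M_n^k)<n$ to control the remainder, then lets $k_0\to\infty$; the integral identity \eqref{int ideneity for c(x)} converts the resulting series into $\Psi_{\rm w}$. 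So the paper's route is strictly more economical: it never looks at individual $\omega_{j,n}$, only at power sums, and those are already in the literature.
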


This proposition will also be shown in Subsection~\ref{Subsection_proof of Props}. 
In particular, Proposition~\ref{Prop_generating function limit} (i) provides a slight extension of \cite[Theorem~1.5]{BMS25}.

Notice also that the first two cumulants, $\ell=1,2$ in \eqref{cumulant lims sH_v2} and \eqref{cumulant lims wH_v2} reproduce the mean and variance given in \eqref{eq. mean and variance sH} and \eqref{eq. mean and variance wH}. For $\ell=3$, we have
\begin{align*}
    \lim_{n \to \infty} \frac{\kappa_3(\mathcal{N}_{2n})}{\sqrt{2n}}  = \sqrt{\frac{1+\tau}{1-\tau} \frac{1}{\pi}}\, \frac{4}{3} \, (3\sqrt{2}-9 +2\sqrt{6})
\end{align*}
for the strong asymmetry and
\begin{equation*} 
        \lim_{n \to \infty} \frac{\kappa_3(\mathcal{N}_{2n})}{2n} = 4e^{-\frac{\alpha^2}{2}} \Big( I_0\big(\tfrac{\alpha^2}{2}\big) + I_1\big(\tfrac{\alpha^2}{2}\big) \Big)
        - 12 e^{-\alpha^2} \Big( I_0\big(\alpha^2\big) + I_1\big(\alpha^2\big) \Big)+ 8 e^{-\frac{3\alpha^2}{2}} \Big( I_0\big(\tfrac{3\alpha^2}{2}\big) + I_1\big(\tfrac{3\alpha^2}{2}\big) \Big) 
\end{equation*}
for the weak asymmetry.

\subsection{Idea of the proof}
We conclude this section by outlining heuristics for how Theorem~\ref{Thm_main} can be derived from Proposition~\ref{Prop_generating function limit}, together with the actual proof strategy that makes these heuristics rigorous. Indeed, the underlying heuristic argument captures main ingredients, but establishing it rigorously requires essential mathematical foundations, which will be addressed in Theorem~\ref{Thm. zero dist and exp profile}. 

We first note that the Laplace transform of $\mathcal{N}_{2n}$ is the 
generating function of $p_{2n,2k}$, namely  
\begin{align}\label{eq:Laplace}
    \E e^{s \mathcal{N}_{2n}} = \sum_{k=0}^n e^{2 s k } p_{2n,2k} = \sum_{k=0}^n z^k p_{2n,2k} \bigg|_{z = e^{2s}}. 
\end{align}
Note that $p_{2n,2k+1}=0$ for all $k=0,\ldots,n-1$, since the number of real eigenvalues must have the same parity as the matrix dimension.
The first key step in the proof of Theorem~\ref{Thm_main} is Proposition~\ref{Prop_generating function limit}, which provides  
\begin{equation} \label{gen function asymp Lap}
\sum_{k=0}^n e^{2 s k } p_{2n,2k} \sim \begin{cases}
\exp\Big[ \sqrt{2n} \, \Psi_{ \rm s }( e^{2s} ) \Big] &\textup{for the strong asymmetry},
\smallskip 
\\
\exp\Big[ 2n \, \Psi_{ \rm w }( e^{2s} ) \Big] &\textup{for the weak asymmetry},
\end{cases}  
\end{equation}
as $n \to \infty$.  
Since our interest is in $p_{2n,2k}$ for moderate deviation ranges of $k$, the natural question is: \smallskip 
\begin{center}
\emph{If we know the asymptotic behaviour of the polynomials with increasing degree ($\sum_{k=0}^n p_{2n,2k} z^k$), \\ how can we derive the asymptotics of their coefficients ($ p_{2n,2k}$)}?
\end{center} \smallskip  
To this end, we first assume the existence of a convex limiting \emph{exponential profile} for $p_{2n,2k}$; that is, there exists a convex function $\phi:[0,\infty)\to\R$ such that  
\begin{equation} \label{eq. Gregory's ansatz}
    p_{2n, 2k} \sim 
    \begin{cases}
       \exp\Big[ - \sqrt{2n} \, \phi_{ \rm s }\Big(\dfrac{2k}{\sqrt{2n}}\Big) \Big] &\textup{for the strong asymmetry},
\smallskip 
\\  
 \exp\Big[ - 2n \, \phi_{ \rm w }\Big(\dfrac{k}{n}\Big) \Big] & \textup{for the weak asymmetry}.
    \end{cases} 
\end{equation} 
From this ansatz and Laplace's method, $\Psi$ can be identified as the Legendre transform of $\phi$. More precisely, in the strong asymmetry case,  
\begin{align*}
    \E e^{s \mathcal{N}_{2n}} \sim \sum_{k=0}^n \exp\Big[\sqrt{2n}\Big(s \frac{2k}{\sqrt{2n}} - \phi\Big(\frac{2k}{\sqrt{2n}}\Big) \Big)\Big] \sim \exp\Big[ \sqrt{2n} \sup_{x\in [0,\infty)}\{s x - \phi(x)\} \Big]
\end{align*}
and an analogous expression holds in the weak asymmetry case. Therefore, 
by \eqref{gen function asymp Lap}, we have $\Psi(e^{2s}) = \phi^*(s)$, where $f^*$ 
denotes the Legendre transform of a function $f$. Since the Legendre 
transform is involutive for convex functions, we can in turn recover 
$\phi$ as the Legendre transform of $\Psi(e^{2s})$.

The main step required to make this argument rigorous is the verification of the ansatz \eqref{eq. Gregory's ansatz}. For this, we rely on a recent result of \cite{JKM25}. Roughly speaking, the existence of a limiting generating function implies the existence of a limiting exponential profile for the coefficients. Such a statement has been rigorously established when the leading asymptotics of the exponential profile match the order of the polynomial, namely $O(n)$. This is precisely the case in the weak asymmetry regime, where we can directly apply \cite[Theorem~5.1]{JKM25}. As a consequence, the convexity of the ansatz \eqref{eq. Gregory's ansatz} also follows.  

In contrast, in the strong asymmetry regime, the leading asymptotic is only $O(\sqrt{n})$. Hence, an extension of \cite[Theorem~5.1]{JKM25} is required to handle the case where only a small portion of the polynomial genuinely contributes to the leading-order asymptotic behaviour. This extension constitutes one of the main steps of our analysis and is formulated in Theorem~\ref{Thm. zero dist and exp profile}.

We refer the reader to Table~\ref{Table_proof ingred summary} for the summary of the key ingredients of the proofs presented in this paper. 

\begin{table}[!ht]
    \centering
    \begin{tabular}{  c  c  c  } 
        \hline
        \cellcolor{gray!0}  & \cellcolor{gray!0} \parbox[c]{3.5cm}{\centering  Strong Asymmetry }  & \cellcolor{gray!0} \parbox[c]{3.5cm}{\centering  Weak Asymmetry} \\
        \hline
        \cellcolor{gray!0} \parbox[c]{3.5cm}{\centering  Limit of \\ generating function} 
        & Proposition~\ref{Prop_generating function limit} (i) 
        & Proposition~\ref{Prop_generating function limit} (ii) \\
        \hline
        \cellcolor{gray!0} \parbox[c]{3.5cm}{\centering  Zeros and exponential profiles of polynomials}   & Theorem \ref{Thm. zero dist and exp profile} & \cite[Theorem 5.1]{JKM25} \\
        \hline
    \end{tabular}
    \caption{Strategy and main ingredients of the proofs presented in this paper.}
    \label{Table_proof ingred summary}
\end{table}

\section{The generating function of the number of real eigenvalues}

In this section, we compile some preliminaries and show Propositions~\ref{Prop_minimisers and curvature} and \ref{Prop_generating function limit}. 

\subsection{Preliminaries on the generating functions}
In this subsection, we collect known results on the generating function of $p_{2n,2k}$. For the case $\tau=0$, these results can be found in \cite{KPTTZ15}, while the general case $\tau \in [0,1)$ is treated in \cite{BMS25}.

The first main observation regarding the generating function arises from the underlying algebraic structure of the eGinOE, namely the fact that it forms a Pfaffian point process \cite{FN07,FN08}. Moreover, in even dimensions, the Pfaffian reduces to a determinant, leading to the following formula:
  \begin{align} \label{eq for det formula of gen function}
        \sum_{k=0}^n z^k  p_{2n,2k} = \det\Big[ {\rm I}_n + (z-1) M_n\Big],
    \end{align}
where ${\rm I}_n$ is the $n\times n$ identity matrix and $M_n$ is an $n\times n$ symmetric matrix with entries
    \begin{align*}
    \begin{split}
        [ M_n ]_{j,k} 
        &= \frac{1}{\sqrt{2\pi}} \frac{(\tau/2)^{j+k-2}}{\sqrt{\Gamma(2j-1)\Gamma(2k-1)}} \int_\R e^{-\frac{x^2}{1+\tau}} H_{2j-2}\Big(\frac{x}{\sqrt{2\tau}}\Big)  H_{2k-2}\Big(\frac{x}{\sqrt{2\tau}}\Big) \,d x.
    \end{split}
    \end{align*}
Here, $H_k$ is the $k$-th Hermite polynomial. 
Furthermore, the spectrum of $M_n$ is contained in the interval $(0,1)$.
These results were established in \cite[Proposition~2.1 and Lemma~2.3]{BMS25}. For the GinOE case, analogous determinantal formulas can also be found in \cite{AK07,KPTTZ15,Fo15a}.

\begin{lemma} \label{lem. trace power expansion of the generating function}
    For any $z \in \mathbb{C}$ with $ |z-1| < 1$, we have
    \begin{align}
        \log\Big( \sum_{k=0}^n z^k p_{2n,2k} \Big) = -\sum_{k = 1}^\infty \frac{1}{k} (1-z)^k \Tr  (M_n^k). \label{eq. trace power expansion of the generating function}
    \end{align} 
\end{lemma}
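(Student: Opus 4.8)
The plan is to read off the identity directly from the determinantal representation \eqref{eq for det formula of gen function} of the generating function, using the fact that the spectrum of $M_n$ lies in $(0,1)$. First I would diagonalise: writing $\mu_1,\dots,\mu_n\in(0,1)$ for the eigenvalues of the real symmetric matrix $M_n$, formula \eqref{eq for det formula of gen function} becomes $\sum_{k=0}^n z^k p_{2n,2k}=\prod_{j=1}^n\bigl(1+(z-1)\mu_j\bigr)$. For $|z-1|<1$ each factor satisfies $\bigl|(z-1)\mu_j\bigr|=\mu_j|z-1|<1$, so it lies in the open disk of radius $1$ centred at $1$; in particular it is non-zero and lies in the domain of the principal logarithm.

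The second step is the expansion. For each $j$ I would use $\log\bigl(1+(z-1)\mu_j\bigr)=-\sum_{k\ge1}\frac{1}{k}(1-z)^k\mu_j^k$, valid since $|(z-1)\mu_j|<1$, then sum over $j$ and interchange the order of summation. This interchange is justified by absolute convergence: since $\mu_j^k\le\mu_j$ for $k\ge1$, one has $\sum_{k\ge1}\frac{|1-z|^k}{k}\sum_{j}\mu_j^k\le \Tr(M_n)\sum_{k\ge1}\frac{|1-z|^k}{k}<\infty$ for $|1-z|<1$. This produces $\sum_{j=1}^n\log\bigl(1+(z-1)\mu_j\bigr)=-\sum_{k\ge1}\frac1k(1-z)^k\,\Tr(M_n^k)$, which is precisely the right-hand side of \eqref{eq. trace power expansion of the generating function}. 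Exponentiating recovers $\prod_{j=1}^n\bigl(1+(z-1)\mu_j\bigr)=\sum_{k=0}^n z^k p_{2n,2k}$, so the right-hand side of the claim is indeed a value of the logarithm of the generating function.

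The remaining — and really only — delicate point is to pin down the correct branch of the complex logarithm on the left. On the simply connected disk $\{|z-1|<1\}$ the generating function is non-vanishing (by the product representation) and equals $1$ at $z=1$ (total probability), so it admits a unique continuous logarithm there vanishing at $z=1$; since $z\mapsto\sum_j\log\bigl(1+(z-1)\mu_j\bigr)$ is continuous on this disk and vanishes at $z=1$, it must coincide with that branch, which is the one meant by $\log(\cdot)$ in the statement. Alternatively one may simply regard \eqref{eq. trace power expansion of the generating function} as an identity of formal (absolutely convergent) power series in $(1-z)$, which bypasses branch issues entirely; in either reading the argument above is complete. I do not anticipate a genuine obstacle here: the statement is essentially the classical identity $\log\det(\mathrm{I}_n+A)=\Tr\log(\mathrm{I}_n+A)$ for $\|A\|_{\mathrm{op}}<1$, and the spectral bound $\operatorname{spec}(M_n)\subset(0,1)$ is exactly what guarantees $\|(z-1)M_n\|_{\mathrm{op}}=|z-1|\max_j\mu_j<1$ on the relevant disk.
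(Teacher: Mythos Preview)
Your proposal is correct and follows essentially the same route as the paper: both use the determinantal formula \eqref{eq for det formula of gen function} together with the spectral bound $\operatorname{spec}(M_n)\subset(0,1)$ to expand $\log\det[\mathrm{I}_n+(z-1)M_n]$ as a power series in $(z-1)$. The only cosmetic difference is that the paper invokes the matrix identity $\log\det A=\Tr\log A$ and then Taylor-expands the matrix logarithm, whereas you diagonalise first and work with scalar logarithms of the eigenvalue factors; your treatment is also more explicit about the branch of the complex logarithm, which the paper leaves implicit.
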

\begin{proof}
    Using \eqref{eq for det formula of gen function} and the matrix identity $\log \det A = \Tr \log A$, we have
    \begin{align} \label{eq. generating function as log-det and trace-log}
        \log\Big( \sum_{k=0}^n z^k p_{2n,2k} \Big) &= \log \det\Big[ {\rm I}_n + (z-1) M_n \Big] = \Tr \log\Big[ {\rm I}_n + (z-1) M_n\Big]. 
    \end{align}
    Since all the eigenvalues of $M_n$ lie in the interval $(0,1)$, for given $n$ and $|z-1|<1$, we have the Taylor expansion of matrix-valued logarithm
    \begin{equation*}
        \log\Big[ {\rm I}_n + (z-1) M_n\Big] =  \sum_{k=1}^\infty (-1)^{k+1} \frac{1}{k} (z-1)^k  M_n^k.
    \end{equation*}
    This gives rise to the desired result \eqref{eq. trace power expansion of the generating function}.
\end{proof}

The main step in \cite{BMS25} for the asymptotic analysis is the study of the moments $\Tr(M_n^k)$. In particular, it was shown in \cite[Propositions~2.5 and~2.6]{BMS25} that for any fixed $k \in \mathbb{N}$, the following limits hold:
\begin{itemize}
    \item For the strong asymmetry regime, 
    \begin{align} \label{eq. trace power limit sH}
        \lim_{n\to \infty} \frac{1}{\sqrt{2n}} \Tr  (M_n^k) = \sqrt{\frac{1+\tau}{1-\tau} \frac{1}{2\pi k}}.
    \end{align}
    \item For the weak asymmetry regime, 
     \begin{align} \label{eq. trace power limit wH}
        \lim_{n\to \infty} \frac{1}{2n} \Tr  (M_n^k) = \frac{e^{-k \alpha^2/2}}{2} \Big[ I_0\Big(\frac{k\alpha^2}{2} \Big) + I_1\Big(\frac{k\alpha^2}{2} \Big) \Big].
    \end{align}
\end{itemize}
These asymptotic behaviours play a central role in the subsequent analysis.

\subsection{Proof of Propositions~\ref{Prop_minimisers and curvature} and ~\ref{Prop_generating function limit}} \label{Subsection_proof of Props}

In this subsection, we show Propositions~~\ref{Prop_minimisers and curvature} and \ref{Prop_generating function limit}.

\begin{proof}[Proof of Proposition~\ref{Prop_minimisers and curvature}]
    
    Let us consider the rate function $\phi_{\rm s}$ given in \eqref{eq. def phi sA rate function} for a constant $\tau \in [0,1)$.
    Defining $\psi_{\rm s}(x) := \Psi_{\rm s}(e^x)$, we write $\phi_{\rm s} = \psi_{\rm s}^*$, where $f^*$ denotes the Legendre transform of a function $f$.
    It follows from direct computations that
    \begin{align}
        \psi_{\rm s}'(x) &= \sqrt{\frac{1+\tau}{1-\tau} \frac{1}{2\pi}} \frac{e^x}{1-e^x} \Li_{1/2}(1-e^x),
        \\
        \psi_{\rm s}''(x) &= \sqrt{\frac{1+\tau}{1-\tau} \frac{1}{2\pi}} \frac{e^x}{(1-e^x)^2} \Big[ \Li_{1/2}(1-e^x) - e^x \Li_{-1/2}(1-e^x)\Big].\label{eq:psi''}
    \end{align} 
    Given a twice-differentiable function $f$, we have identities
    \begin{equation*}
        (f^*)'(x) = (f')^{-1}(x), \qquad (f^*)''(x) = \frac{1}{f''((f')^{-1}(x))}.
    \end{equation*}
    Let us denote by $x_{\rm s}$ the minima of $\phi_{\rm s}$.
    Then, we have $0=\phi_{\rm s}'(x_{\rm s}) = \frac{1}{2} (\psi_{\rm s}^*)'(\frac{x_{\rm s}}{2}) = \frac{1}{2} (\psi_{\rm s}')^{-1}(\frac{x_{\rm s}}{2})$.
    Therefore,
    \begin{equation*}
        x_{\rm s} = 2 \psi_{\rm s}'(0) = \sqrt{\frac{1+\tau}{1-\tau} \frac{2}{\pi}} = \lim_{n\to\infty} \frac{\mathbb{E}\mathcal{N}_{2n}}{\sqrt{2n}},
    \end{equation*} 
    where we used \eqref{eq. mean and variance sH}, and similarly
    \begin{align*}
        \phi_{\rm s}''(x_{\rm s}) = \frac{1}{4} (\psi_{\rm s}^*)''(\frac{x_{\rm s}}{2}) =   \frac{1}{4 \psi_{\rm s}''((\psi_{\rm s}')^{-1}(\frac{x_{\rm s}}{2}))} = \frac{1}{4\psi_{\rm s}''(0)} = \frac{1}{(2-\sqrt{2}) \sqrt{\frac{1+\tau}{1-\tau} \frac{2}{\pi}} } = \lim_{n\to\infty}\frac{\sqrt{2n}}{\textup{Var} \mathcal{N}_{2n}}.
    \end{align*}

    Similarly, let us denote by $x_{\rm w}$ the minima of the rate function $\phi_{\rm w}$ and define $\psi_{\rm w}(x) := \Psi_{\rm w}(e^x)$.
    Then, we have
    \begin{align}
        \psi_{\rm w}'(x) &= \frac{2}{\pi} \int_0^1 \frac{e^x}{e^x + e^{\alpha^2 s^2}-1} \sqrt{1-s^2} \, ds,
        \\
        \psi_{\rm w}''(x) &=  \frac{2}{\pi} \int_0^1 \frac{e^x(e^{\alpha^2 s^2}-1)}{(e^x + e^{\alpha^2 s^2}-1)^2} \sqrt{1-s^2} \, ds.\label{eq:psi_s''}
    \end{align}
    Then, using $0=\phi_{\rm w}'(x_{\rm w}) =  \frac{1}{2} (\psi_{\rm w}^*)'(\frac{x_{\rm w}}{2}) = \frac{1}{2} (\psi_{\rm w}')^{-1}(\frac{x_{\rm w}}{2})$, we obtain
    \begin{equation*}
        x_{\rm w} = 2 \psi_{\rm w}'(0) = c(\alpha) = \lim_{n\to \infty} \frac{\mathbb{E} \mathcal{N}_{2n}}{2n},
    \end{equation*}
    and
    \begin{equation*}
        \phi_{\rm w}''(x_{\rm w}) = \frac{1}{4} \frac{1}{\psi_{\rm w}''(0)} = \frac{1}{2 (c(\alpha) - c(\sqrt{2}\alpha) )} = \lim_{n \to \infty} \frac{2n}{\textup{Var} \mathcal{N}_{2n}},
    \end{equation*}
    where the expectation and variance are given in \eqref{eq. mean and variance wH}.
    This completes the proof. 
\end{proof}

\begin{proof}[Proof of Proposition~\ref{Prop_generating function limit}] 
    We first prove the locally uniform convergence \eqref{eq. limit of gen func sH} for $z \in \C \setminus (-\infty,0]$.
    Suppose $|z+1|>\epsilon$ for some $\epsilon>0$.
    We observe from \eqref{eq. generating function as log-det and trace-log} that
    \begin{equation} \label{eq. gen func bound by trace}
        \log\Big( \sum_{k=0}^n z^k p_{2n,2k} \Big) = \Tr \log\Big[{\rm I}_n + (z-1) M_n\Big] 
        \leq c_\epsilon |z-1| \Tr M_n,
    \end{equation}
    where $c_\epsilon>0$ is a constant such that $|\log(1+w)| \leq c_\epsilon |w|$ for any $w \in \C \setminus(-\infty,0]$ satisfying $|w+1|>\epsilon$.
    Here, we also used the fact that $M_n$ is positive-definite.
    Together with \eqref{eq. trace power limit sH}, the above bound implies
    \begin{equation*}
        \bigg\{ \frac{1}{\sqrt{2n}} \log\Big( \sum_{k=0}^n z^k p_{2n,2k} \Big) \bigg\}_{n=1}^\infty
    \end{equation*}
    is a sequence of locally bounded holomorphic functions on $\C\setminus(-\infty,0]$. Hence, by Montel’s theorem, it is a normal family.
    On the other hand, by \cite[Theorem~1.5]{BMS25}, the sequence converges to $\Psi_{\rm s}$ on the disk $\{z \in \C: |z-1|<1\}$.
    Thus, applying the identity theorem, the assertion \eqref{eq. limit of gen func sH} follows. 
    
    Recall that by definition, for a random variable $X$ we have
    \begin{equation} \label{eq. cumulant and moment gen. func.}
        \log \E e^{t X} = \sum_{\ell=1}^\infty \frac{t^\ell}{\ell!} \kappa_\ell(X),
    \end{equation}
    where $\kappa_\ell(X)$ is the $\ell$-th cumulant of $X$.
    Then, \eqref{eq. limit of gen func sH} in turn implies that
    \begin{equation*}
        \log \E e^{t \mathcal{N}_{2n}} = - \sqrt{2n} \sqrt{\frac{1+\tau}{1-\tau} \frac{1}{2\pi}} \Li_{3/2}(1-e^{2t}) + o(\sqrt{n}),
    \end{equation*}
    as $n\to\infty$.
    Substituting the series expansion of the polylogarithm
    \begin{equation*}
        \Li_{3/2}(1-e^{2t}) = \sum_{\ell=1}^\infty \frac{(2t)^\ell}{\ell!} \sum_{m=1}^\ell \frac{(-1)^m}{\sqrt{m}} (m-1)! S(\ell,m),
    \end{equation*}
    which can be obtained along the lines presented in the supplementary material of \cite{PS18}. 
    Comparing the coefficients using \eqref{eq. cumulant and moment gen. func.}, we obtain the desired result \eqref{cumulant lims sH_v2}.

Next, we prove the second claim (ii). Since the eigenvalues of $M_n$ lie in the interval $(0,1)$, the power series \eqref{eq. trace power expansion of the generating function} is absolutely convergent for sufficiently large $n$ whenever $|z-1|<1$.
For $z\in \C$, use of Montel's theorem with \eqref{eq. trace power limit wH} and \eqref{eq. gen func bound by trace} shows that a sequence
\begin{equation*}
    \bigg\{ \frac{1}{2n} \log\Big( \sum_{k=0}^n z^k p_{2n,2k} \Big) \bigg\}_{n=1}^\infty
\end{equation*}
is locally uniformly convergent for any $z\in\C\setminus(-\infty,0]$.
Hence it suffices to establish \eqref{eq. limit of gen func wH} for $z \in (0,1)$, and the general case then follows from the identity theorem. 

    We compute the upper bound for the limiting generating function in \eqref{eq. limit of gen func wH}.
    Assume $z \in (0,1)$.
    Since $(1-z)^k \Tr M_n^k >0$, we deduce from \eqref{eq. trace power expansion of the generating function} that  for any $k_0\in\mathbb{N}$, 
    \begin{align*}
        \log\Big( \sum_{k=0}^n z^k p_{2n,2k} \Big) < -\sum_{k = 1}^{k_0} \frac{1}{k} (1-z)^k \Tr  M_n^k.
    \end{align*}
   Thus, together with \eqref{eq. trace power limit wH}, we have
    \begin{align*}
        \limsup_{n\to\infty} \frac{1}{2n} \log\Big( \sum_{k=0}^n z^k p_{2n,2k} \Big) &\leq -\lim_{n\to\infty} \sum_{k = 1}^{k_0} \frac{1}{k} (1-z)^k \frac{1}{2n} \Tr  M_n^k 
        \\
        &= -\sum_{k = 1}^{k_0} \frac{1}{2k} (1-z)^k e^{-k\alpha^2/2} \Big[I_0\Big( \frac{k\alpha^2}{2}\Big) + I_1\Big( \frac{k\alpha^2}{2}\Big)\Big].
    \end{align*}
    Letting $k_0 \to \infty$, we obtain 
    \begin{align}
    \begin{split} 
        &\quad \limsup_{n\to\infty} \frac{1}{2n} \log\Big( \sum_{k=0}^n z^k p_{2n,2k} \Big) \leq - \sum_{k = 1}^\infty \frac{1}{2k} (1-z)^k e^{-k\alpha^2/2} \Big[I_0\Big( \frac{k\alpha^2}{2}\Big) + I_1\Big( \frac{k\alpha^2}{2}\Big)\Big]
        \\
        &= - \frac{2}{\pi} \int_0^1 \sum_{k = 1}^\infty \frac{1}{k} (1-z)^k e^{-k\alpha^2 s^2} \sqrt{1-s^2} \,ds
        =\frac{2}{\pi} \int_0^1 \log\Big( 1- (1-z)e^{-\alpha^2 s^2}\Big) \sqrt{1-s^2} \, ds. \label{eq. limsup gen func wH}
    \end{split}
    \end{align}
    For the first equality, we use the integral identity (see e.g. \cite[Proof of Lemma A.1]{BMS25}) 
    \begin{equation} \label{int ideneity for c(x)}
        \int_0^1 e^{-x s^2} \sqrt{1-s^2} ds = \frac{\pi}{4} e^{-\frac{x}{2}} \Big[I_0\Big( \frac{x}{2}\Big) + I_1\Big( \frac{x}{2}\Big)\Big], 
    \end{equation}
    while for the second equality, we use the Taylor expansion of $\log(1-x)$.

    We show the lower bound for the limiting generating function in \eqref{eq. limit of gen func wH}.
    Let 
    \begin{align*}
        \log(1-x) = - \sum_{j=1}^{k} \frac{1}{j} x^j + R_k(x), \qquad R_{k}(x) := -\int_0^x \Big(\frac{x-t}{1-t}\Big)^{k} dt .
    \end{align*}
    Notice that for $x \in [0,1)$, we have $R_k(x) \ge - x^{k+1}/(1-x).$
    Furthermore, since the spectrum of $M_n$ is contained in the interval $(0,1)$, we have $\Tr M_n^k < n$ for any $k \in \mathbb{N}$.
    Therefore, for any $k_0\in\mathbb{N}$, we have
    \begin{align*}
        &\quad \frac{1}{2n} \log\Big( \sum_{k=0}^n z^k p_{2n,2k} \Big) > - \sum_{k = 1}^{k_0} \frac{ (1-z)^k  }{k}  \frac{1}{2n} \Tr  M_n^k - \sum_{k = k_0+1}^{\infty} \frac{  (1-z)^k }{2k} 
        \\
        &= -\sum_{k = 1}^{k_0} \frac{1}{k} (1-z)^k \frac{1}{2n} \Tr  M_n^k +  \frac{R_{k_0}(1-z)}{2}
        \geq -\sum_{k = 1}^{k_0} \frac{1}{k} (1-z)^k \frac{1}{2n} \Tr  M_n^k -  \frac{(1-z)^{k_0+1}}{2z}. 
    \end{align*} 
    Taking the limit $n\to\infty$, this yields
    \begin{align*}
        \liminf_{n\to\infty}\frac{1}{2n} \log\Big( \sum_{k=0}^n z^k p_{2n,2k} \Big) &\geq - \sum_{k = 1}^{k_0} \frac{1}{2k} (1-z)^k c(\sqrt{k}\alpha) - \frac{(1-z)^{k_0+1}}{2z}.
    \end{align*}
    Since $z \in (0,1)$, by taking limit $k_0\to \infty$, it follows from \eqref{eq. coeff avg number}, \eqref{int ideneity for c(x)} and the dominated convergence theorem that
    \begin{align}
        \liminf_{n\to\infty} \frac{1}{2n} \log\Big( \sum_{k=0}^n z^k p_{2n,2k} \Big) \geq \frac{2}{\pi} \int_0^1 \log\Big( 1- (1-z)e^{-\alpha^2 s^2}\Big) \sqrt{1-s^2} \, ds. \label{eq. liminf gen func wH}
    \end{align}
    Combining \eqref{eq. limsup gen func wH} and \eqref{eq. liminf gen func wH}, we obtain \eqref{eq. limit of gen func wH} for $z \in (0,1)$. This completes the proof of  \eqref{eq. limit of gen func wH}.

 Finally, we show \eqref{cumulant lims wH_v2}. 
  We have
    \begin{equation*}
        \log \E e^{t \mathcal{N}_{2n}} = \frac{4n}{\pi} \int_0^1 \log\Big( 1- (1-e^{2t})e^{-\alpha^2 s^2}\Big) \sqrt{1-s^2} \, ds + o(n),
    \end{equation*}
    as $n\to\infty$.
    By series expansions, we have
    \begin{equation*}
        \log\Big(1-(1-e^{2t}) e^{-\alpha^2 s^2}\Big) = \sum_{\ell=1}^\infty \frac{t^\ell}{\ell!} 2^\ell \sum_{m=1}^\ell (-1)^{m-1} (m-1)! S(\ell, m) e^{-m\alpha^2s^2}.
    \end{equation*}
    Thus, using Fubini's theorem and \eqref{int ideneity for c(x)}, we have
    \begin{equation*}
        \log \E e^{t \mathcal{N}_{2n}} = n\sum_{\ell=1}^\infty \frac{t^\ell}{\ell!} 2^\ell \sum_{m=1}^\ell (-1)^{m-1} (m-1)! S(\ell, m) e^{-m\alpha^2/2}\Big[ I_0\Big(\frac{m\alpha^2}{2}\Big) + I_1\Big(\frac{m\alpha^2}{2}\Big) \Big] + o(n),
    \end{equation*}
    as $n\to\infty$.
    Then, the conclusion follows by a comparison of coefficients using \eqref{eq. cumulant and moment gen. func.}.
\end{proof}

\section{Exponential profiles of the generating functions} \label{sec. Exponential profiles}

In this section, we will make the ansatz \eqref{eq. Gregory's ansatz} rigorous and complete the proof of Theorem~\ref{Thm_main}.

\subsection{A local Large Deviation Principle}
By the G\"artner-Ellis Theorem, convergence of the log-moment generating function as in Proposition \ref{Prop_generating function limit} implies a Large Deviation Principle (LDP) with speed $n$ or $\sqrt n$, respectively (see e.g.  \cite{DZ10}).  
From this viewpoint, \eqref{eq. Gregory's ansatz} corresponds to the following local LDP that holds under a convexity assumption of the limit log-Laplace transform $x\mapsto\Psi_{\rm s/w}(e^{x})$.

The following theorem gives a rigorous justification of the ansatz \eqref{eq. Gregory's ansatz}.

\begin{theorem}[\textbf{Exponential profile of coefficients}] \label{Thm. zero dist and exp profile}
	Consider a sequence of polynomials $\{P_{n}(z)\}_{n\in\mathbb{N}}$ of degree $n$ with non-positive real roots and $P_n(1)=1$, so that we can denote it by
	\begin{align}\label{eq:polynomials}
		P_n(z) = \sum_{k=0}^n a_{n,k} z^k= \prod_{j=1}^n \frac{z+\lambda_{n,j}}{1+ \lambda_{n,j}}, \qquad \lambda_{n,1},\dots,\lambda_{n,n} \ge 0.
	\end{align}
  Suppose there is a speed $c_n \to \infty$ as $n\to\infty$, such that
  \begin{align}
\Psi(z):=\lim_{ n \to \infty } \frac 1 {c_n}\log P_n(z),\quad z\in (0,\infty)
  \end{align}
 exists (which is automatically smooth) and $\Phi(z):=z\Psi'(z)$ is strictly increasing. 
 Define $\underline{m} := \lim_{x\to0+} \Phi(x)$, $\overline{m} := \lim_{x \to \infty} \Phi(x)$ and the exponential profile
 \begin{equation}
     g(x) := \Psi(\Phi^{-1}(x)) - x \log \Phi^{-1}(x)=-\sup_{t\in\R}\{tx-\Psi(e^t)\},
 \end{equation}
 which is a smooth, strictly concave function on $(\underline m,\overline m)$. Then, for any $[a,b]\subset(\underline{m},\overline{m})$, we have
    \begin{equation} \label{eq. exp profile of coeff}
        \sup_{k \in \{0,\dots,n\} \cap [ac_n,bc_n] } 
        \bigg| \frac{\log a_{n,k}}{c_n} - g\Big(\frac{k}{c_n}\Big) \bigg| \to 0, \qquad \textup{as } n \to \infty. 
    \end{equation}
\end{theorem}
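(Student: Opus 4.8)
The plan is to read the polynomials as probability generating functions and then run a sharp exponential‑tilting argument, keeping every estimate uniform in the target index $k$. Since $P_n$ has only non‑positive real roots and $P_n(1)=1$, the factorisation \eqref{eq:polynomials} gives $P_n(z)=\E[z^{S_n}]$, where $S_n=B_{n,1}+\dots+B_{n,n}$ is a sum of independent Bernoulli variables with $\P(B_{n,j}=1)=(1+\lambda_{n,j})^{-1}$; hence $a_{n,k}=\P(S_n=k)$. Put $\Lambda_n(t):=\log\E[e^{tS_n}]=\log P_n(e^t)$, so the hypothesis reads $\Lambda_n(t)/c_n\to\lambda(t):=\Psi(e^t)$ for every $t\in\R$. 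As $\Lambda_n$ is convex and $\lambda$ finite and smooth, this convergence is locally uniform, and since $\Lambda_n'$ is increasing with continuous limit we also get $\Lambda_n'(t)/c_n\to\lambda'(t)=\Phi(e^t)$ locally uniformly; by hypothesis $\lambda'$ is a strictly increasing homeomorphism of $\R$ onto $(\underline m,\overline m)$. Finally $g$ is minus the Legendre transform of $\lambda$, so $g(\lambda'(t))=\lambda(t)-t\lambda'(t)$ and $g'=-(\lambda')^{-1}$.

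\textbf{Choice of tilt and compactness.} Fix $[a,b]\subset(\underline m,\overline m)$ and $k\in\{0,\dots,n\}\cap[ac_n,bc_n]$. The assumption that $\Phi$ is strictly increasing forces $S_n$ to be genuinely random (otherwise $\lambda'$ would be constant), so $\Lambda_n'$ is a continuous strictly increasing bijection of $\R$ onto $(\min S_n,\max S_n)$. Since $\min S_n/c_n\to\underline m$ and $\max S_n/c_n\to\overline m$ (read off from the behaviour of $\Psi$ at $0$ and $\infty$), for large $n$ the interval $[ac_n,bc_n]$ lies strictly inside $(\min S_n,\max S_n)$, and there is a unique $t_n(k)$ with $\Lambda_n'(t_n(k))=k$ — the tilt under which $S_n$ has mean exactly $k$. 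A short argument using $\Lambda_n'/c_n\to\lambda'$ locally uniformly and $\lambda'(\R)=(\underline m,\overline m)$ shows that $\{t_n(k):n\ge1,\ ac_n\le k\le bc_n\}$ lies in a fixed compact interval $[-T_0,T_0]$; consequently $\lambda'(t_n(k))=k/c_n+o(1)$ and $\Lambda_n(t_n(k))/c_n=\lambda(t_n(k))+o(1)$, uniformly in $k$.

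\textbf{Uniform local limit theorem and conclusion.} Let $\P_t$ be the tilted law $\P_t(S_n=j)\propto e^{tj}\P(S_n=j)$, and write $S_n^{(t)}$ for a variable with law $\P_t$; then $S_n^{(t)}$ is again a sum of independent Bernoullis, with mean $\Lambda_n'(t)$ and variance $V_n(t):=\Lambda_n''(t)$. The structural fact we exploit is that a Bernoulli variable $B$ satisfies $|\kappa_r(B)|\le\kappa_2(B)$ for $r=2,3,4$; summing, $|V_n'(t)|=|\kappa_3(S_n^{(t)})|\le V_n(t)$ and $|\kappa_4(S_n^{(t)})|=O(V_n(t))$. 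From $|V_n'|\le V_n$ (Grönwall) we get $V_n(t)\asymp\max_{[t-1,t+1]}V_n$, while $\int_{t-1}^{t+1}V_n=\Lambda_n'(t+1)-\Lambda_n'(t-1)=c_n\big(\lambda'(t+1)-\lambda'(t-1)\big)+o(c_n)\asymp c_n$ by strict monotonicity of $\lambda'$; hence $V_n(t)\asymp c_n$ uniformly on $[-T_0,T_0]$. With variance and third/fourth cumulants thus controlled, the standard characteristic‑function proof of the lattice local CLT (splitting $\int_{-\pi}^{\pi}$ into a Gaussian window $|\theta|\lesssim V_n^{-1/2}$ and a remainder controlled by aperiodicity of the Bernoulli increments) gives, uniformly in $t\in[-T_0,T_0]$ and $j\in\mathbb{Z}$,
\begin{equation*}
\sqrt{V_n(t)}\;\P_t(S_n=j)=\tfrac1{\sqrt{2\pi}}\exp\!\Big(-\tfrac{(j-\Lambda_n'(t))^2}{2V_n(t)}\Big)+o(1).
\end{equation*}
Taking $t=t_n(k)$ and $j=k$, so the exponent vanishes, yields $a_{n,k}=e^{t_n(k)k-\Lambda_n(t_n(k))}\,(2\pi V_n(t_n(k)))^{-1/2}(1+o(1))$ uniformly in $k$. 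Since $V_n(t_n(k))\asymp c_n$ and $c_n\to\infty$, taking $\tfrac1{c_n}\log$ gives
\begin{equation*}
\frac{\log a_{n,k}}{c_n}=-t_n(k)\frac{k}{c_n}+\frac{\Lambda_n(t_n(k))}{c_n}+o(1)=\lambda(t_n(k))-t_n(k)\lambda'(t_n(k))+o(1)=g\big(\lambda'(t_n(k))\big)+o(1),
\end{equation*}
using $k/c_n=\Lambda_n'(t_n(k))/c_n=\lambda'(t_n(k))+o(1)$, $\Lambda_n(t_n(k))/c_n=\lambda(t_n(k))+o(1)$, and the Legendre identity from the first step. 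As $\lambda'(t_n(k))=k/c_n+o(1)$ and $g$ is uniformly continuous near $[a,b]$, the right‑hand side equals $g(k/c_n)+o(1)$, with all $o(1)$ uniform over $k\in\{0,\dots,n\}\cap[ac_n,bc_n]$; this is \eqref{eq. exp profile of coeff}. (The matching Chernoff bound $\log a_{n,k}\le-\sup_t\{tk-\Lambda_n(t)\}$ recovers the upper half of \eqref{eq. exp profile of coeff} at once and serves as a check.)

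\textbf{Main difficulty.} The crux is the uniform local CLT in the third step. The hypothesis provides no rate in $\Lambda_n/c_n\to\lambda$, so one cannot pin $k$ to within $O(\sqrt{c_n})$ of $c_n\lambda'(t)$ for a limit‑defined tilt $t$; this is exactly why the pre‑limit tilt $t_n(k)=(\Lambda_n')^{-1}(k)$ must be used, after which one needs a local CLT valid uniformly over the whole family of tilted Bernoulli arrays with only qualitative information on the limit. The Bernoulli cumulant inequality $|\kappa_r|\le\kappa_2$ is what unlocks this: it yields simultaneously the two‑sided estimate $V_n(t)\asymp c_n$ (so the normalisation contributes only $O(\log c_n)=o(c_n)$, regardless of how $c_n$ compares to $n$) and the higher‑cumulant bounds needed to control the local‑CLT error uniformly. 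By contrast, the case $c_n=n$ of \cite[Theorem~5.1]{JKM25} is easier because there $\log n=O(c_n)$ and the polynomial is "genuinely" of the expected size; the novelty here is precisely handling $c_n=o(n)$, e.g. $c_n=\sqrt{2n}$.
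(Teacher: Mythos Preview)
Your proof follows the same architecture as the paper's: write $a_{n,k}=\P(S_n=k)$ for a Bernoulli sum $S_n$, tilt exponentially with the pre-limit parameter $t_n(k)=(\Lambda_n')^{-1}(k)$ (equivalently the paper's $\theta_*=\Phi_n^{-1}(k/c_n)$), show the tilted variance is $\asymp c_n$, and feed this into a local CLT so that the normalising factor contributes only $o(c_n)$ on the log scale. The paper dispatches the local CLT by citing Bender \cite{Be73}, whose hypothesis is merely that the Bernoulli-sum variance diverges; you instead sketch the characteristic-function proof directly. For the variance bound the paper uses the explicit formula $\widetilde\sigma_{n,k}^2=c_n\theta_*\Phi_n'(\theta_*)$ and controls $\Phi_n'$ via locally uniform convergence $\Phi_n'\to\Phi'>0$, whereas your Gr\"onwall device based on $|\kappa_3(B)|\le\kappa_2(B)$ together with $\int_{t-1}^{t+1}V_n=\Lambda_n'(t+1)-\Lambda_n'(t-1)\asymp c_n$ is a pleasant self-contained alternative that avoids any second-derivative convergence.

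One small imprecision to fix: the assertion $\min S_n/c_n\to\underline m$ and $\max S_n/c_n\to\overline m$ is not true in general. What actually holds (and is all you need) are the one-sided inequalities $\limsup_n \min S_n/c_n\le\underline m$ and $\liminf_n n/c_n\ge\overline m$, which follow from $\Phi_n\to\Phi$ together with the elementary bounds $\mu_n(\{0\})\le\Phi_n(z)\le n/c_n$; these already guarantee $[ac_n,bc_n]\subset(\min S_n,\max S_n)$ for large $n$.
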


 \begin{rem} $ $
     \begin{enumerate}
         \item Note that $\Phi$ is strictly increasing, if and only if $\Psi(e^x)$ is convex. By the Nevanlinna-Pick representation (see e.g. Theorem 10 in \cite[Chapter 3]{MS17}), $\Phi$ is the Stieltjes transform of some (not probability) measure $\mu_\infty$ with mass $\overline m$ on $\R$. It is the vague limit of the empirical zero measure 
\begin{align}\label{eq:empirical_distribution}
        	\mu_n := \frac{1}{c_n} \sum_{j=1}^n \delta_{-\lambda_{n,j}}\longrightarrow \mu_\infty\quad \text{as }n\to\infty.
        \end{align}   
        Hence, strict monotonicity of $\Phi$ is equivalent to $\mu_\infty\neq c\delta_0$. Similarly, $\Psi$ can be viewed as the logarithmic potential of $\mu_\infty$. 
        \smallskip 
        \item It is implicit that $c_n=O( n)$. Indeed, if $c_n\gg n$, then 
        $\Psi'(z)=0$ 
        for any $z>0$. Thus strict monotonicity of $\Phi$ cannot be achieved. 
    \end{enumerate}
 \end{rem}

 \begin{rem}
    The proof of Theorem \ref{Thm. zero dist and exp profile} follows the approach of \cite[Proof of Theorem 5.1]{JKM25}, wherein the real rootedness of $P_n$ is crucial in order to represent $P_n$ as the generating function of sums of Bernoulli random variables. Let us comment on how Theorem \ref{Thm. zero dist and exp profile} may be of independent interest as a generalization of the main tool in \cite{JKM25,JKM25II}. Therein, it is shown that the following two statements are equivalent:
    \begin{enumerate}
    	\item[(a)] The coefficients $a_{n,k}$ of $P_n$ have an exponential profile $g$, in the sense that $\frac 1 n \log a_{n,k} -g(k/n)\to 0$ as $n\to\infty $, uniformly in $(\underline m+\varepsilon)n\le k\le (\overline m -\varepsilon )n$ as in \eqref{eq. exp profile of coeff}.
        \smallskip 
    	\item[(b)] The empirical zero distribution of the polynomial $P_n$ converges weakly to some distribution $\mu$ on $[-\infty, 0]$, such that $\Phi(z)= \int_{(-\infty,0]} \frac{z}{z-y} \mu(dy)$ for $z>0$ 
    	is the inverse of $\alpha\mapsto e^{-g'(\alpha)}$. 
    \end{enumerate}
This corresponds to $c_n=n$ in Theorem \ref{Thm. zero dist and exp profile}, which extends to the case where also a negligible part $c_n=o(n)$ of the coefficients may be considered with a different scaling. This corresponds to zooming into the empirical distribution around 0, while forgetting the majority of zeros that drift away to $-\infty$. Indeed, if the above $\mu$ is not $\delta_{-\infty}$, then the conditions of Theorem \ref{Thm. zero dist and exp profile} fail for slow speeds $c_n=o(n)$. Thus, we can obtain moderate deviation asymptotics for these coefficients, \emph{because} the zero distribution of $P_n$ drifts to $-\infty$ reasonably fast.

Note that this is the case for the characteristic polynomial \eqref{eq for det formula of gen function}, whose zeros are $1-1/\lambda_j$ for $\lambda_j\in(0,1)$ being the eigenvalues of $M_n$. Thus, it is crucial that most eigenvalues of $M_n$ are close to $0$.
 \end{rem}

\begin{proof}[Proof of Theorem \ref{Thm. zero dist and exp profile}]
Let us first show that $\Psi$ is holomorphic on $D=\{z\in\C:\re z>0\}$. 
Since $P_n$ has only non-positive real roots and using the empirical measure \eqref{eq:empirical_distribution}, we write
    \begin{align*}
        \Psi_n(z) := \frac{1}{c_n} \log P_n(z) 
        = \int_{-\infty}^0 \log\Big( \frac{z-y}{1-y} \Big) \mu_n(dy) \to \Psi(z)
    \end{align*}
    for any $z \in (0,\infty)$, as $n \to \infty$ and where $\Psi_n$ is holomorphic on $D$. Note that the convergence $\Psi_n \to \Psi$ of convergent monotonic functions $\Psi_n(z)$ is locally uniform in $(0,\infty)$, but also
\begin{align*}
        \Psi'_n(z) =\frac{P_n'(z)}{c_nP_n(z)}= \int_{-\infty}^0 \frac{1}{z-y} \mu_n(dy) 
    \end{align*}
    are positive and strictly decreasing in $z\in (0,\infty)$. Furthermore, since $\mu_n$ is supported on the negative real axis, we have $|\Psi_n'(z)| \leq \Psi_n'(\re z)$ for all $z \in D$. Hence for any $z\in D_\delta:=\{z\in\C:\re z\ge \delta\}$, $\delta>0$, we have a uniform bound
    \begin{align*}
    |\Psi_n'(z)|\le \Psi_n'(\delta)\le \frac{1}{\delta/2}\int_{\delta/2}^{\delta}\Psi'_n(z)dz\to \frac 2 \delta (\Psi(\delta)-\Psi(\delta/2) ) <\infty.
    \end{align*}
This implies a locally uniform bound for $\Psi_n$ on compact subsets of $D_\delta$ by
    \begin{align*}
    |\Psi_n(z)|\le |\Psi_n(\delta)|+ \int_\delta^z |\Psi'_n(z)|dz <c_\delta(1+|z-\delta|)<\infty
    \end{align*}
for some constant $c_\delta>0$.
Therefore, Montel's theorem (or, Vitali-Porter)  implies that $\Psi_n$ converges locally uniformly to a holomorphic limit on $D_\delta$ that coincides with $\Psi$ on $(\delta,\infty)$ for all $\delta>0$, hence we call it $\Psi$.
    Similarly, we have locally uniform convergence of convergent (completely) monotonic functions: for $z \in (0,\infty)$, 
    \begin{align*}
        \Phi_n(z) :=  z\Psi'_n(z)=\frac z {c_n}\frac{P'_n(z)}{P_n(z)}=\int_{-\infty}^0 \frac{z}{z-y} \mu_n(dy) \to \Phi(z):=z\Psi'(z), 
    \end{align*}
    as $n \to \infty.$ 
    By strict monotonicity and continuity of $\Phi$, the inverse $\Phi^{-1}: (\underline{m}, \overline{m}) \to (0,\infty)$ is well-defined and continuous. Again by locally uniform convergence of convergent monotonic functions, the convergence $\Phi_n^{-1} \to \Phi^{-1}$ is locally uniform in $(\underline{m}, \overline{m})$. Since $-g$ is the Legendre transform of the strictly convex function $\Psi(e^x)$ on $x\in\R$, it follows that $g$ is automatically strictly concave and smooth.

We are now ready to establish the local LDP.
    Let $S_n := \sum_{k=1}^n B_{n,k}$, where $B_{n,k} \stackrel{d}{=} \textup{Ber}(\tfrac{1}{1+\lambda_{n,k}})$ are independent Bernoulli random variables for $k, n\in\mathbb{N}$ with $k\leq n$. By the representation \eqref{eq:polynomials}, we have $P_n(z) = \E z^{S_n}$ and $a_{n,k}=\P(S_n=k)$.

    First, we tilt the probability measure $\P$.
    More specifically, for a parameter $\theta>0$, we define a probability measure $\widetilde{\P}_\theta$ induced from $\P$ by
    \begin{equation*}
        \widetilde{\E}_\theta h(B_{n,1}, \ldots, B_{n,n}) = \frac{\E[h(B_{n,1}, \ldots, B_{n,n}) \theta^{S_n}]}{P_n(\theta)} \;,
    \end{equation*}
    for any bounded function $h:\{0,1\}^n\to\R$.
    Substituting $h(x_1, \ldots, x_n)=\mathbf{1}_{\{x_1 + \cdots +x_n = k\}}$, we have
    \begin{equation*}
        \P(S_n = k) = P_n(\theta) \theta^{-k} \widetilde{\P}_\theta(S_n = k).
    \end{equation*}
    Equivalently, we have
    \begin{align} \label{eq. step 1 proof JKM25}
        \frac{1}{c_n} \log \P(S_n=k) = - \frac{k}{c_n} \log \theta + \frac{1}{c_n} \log P_n(\theta) + \frac{1}{c_n} \log \widetilde{\P}_\theta(S_n=k).
    \end{align}
   Our goal is to tune the parameter $\theta\approx \Phi^{-1}(k/c_n)$ so that the first two terms of \eqref{eq. step 1 proof JKM25} give the desired result $g(x) = \Psi(\Phi^{-1}(x)) - x \log \Phi^{-1}(x)$ and the last term will be negligible.

    Let $b>a>0$ such that $[a,b]\subset (\underline{m}, \overline{m})$ is some compact interval, then for sufficiently large $n$,
    \begin{equation*}
        \theta_*(n,k) := \Phi_n^{-1}\Big( \frac{k}{c_n} \Big)
    \end{equation*}
    is well-defined for any $k \in \{0,\dots,n\} \cap [ac_n,bc_n] $.
    By locally uniform convergence of $\Phi_n^{-1}\to\Phi^{-1}$, we have
    \begin{align} \label{eq. theta* asymp Inverse Phi}
        \sup_{k \in \{0,\dots,n\} \cap [ac_n,bc_n] } \bigg|  \theta_*(n,k) -  \Phi^{-1}( \tfrac{k}{c_n }) \bigg| \xrightarrow{n\to\infty} 0.
    \end{align}
Combining \eqref{eq. theta* asymp Inverse Phi} with locally uniform convergence of $\Psi_n \to \Psi$, it follows that
    \begin{align} \label{eq. log Pn asymp}
	\sup_{k \in \{0,\dots,n\} \cap [ac_n,bc_n] } \bigg| \frac{1}{c_n}\log P_n(\theta_*(n,k)) - \Psi \Big( \Phi^{-1}( \tfrac{k}{c_n }) \Big) \bigg| \xrightarrow{n\to\infty} 0,
\end{align} 
and similarly,
    \begin{align}
        \sup_{k \in \{0,\dots,n\} \cap [ac_n,bc_n] } \bigg| \frac k {c_n}\log \theta_*(n,k) - \frac k {c_n}\log \Phi^{-1}( \tfrac{k}{c_n }) \bigg| \xrightarrow{n\to\infty} 0.
    \end{align}
    In particular, the first two terms of \eqref{eq. step 1 proof JKM25} approximate $g(k/c_n)$.

    By definition, $\widetilde{\P}_\theta(B_{n,j}=1) = \theta / (\theta+\lambda_{n,j})$ for $j= 1, \ldots, n$.
    Thus for $\theta_* \equiv \theta_*(n,k)$, we have
    \begin{align} \label{eq. def of sigma tilde}
        \widetilde{\sigma}_{n,k}^2:= \widetilde{\operatorname{Var}}_{\theta_*}(S_n) = \sum_{j=1}^n \frac{\lambda_{n,j} \theta_*}{(\theta_*+\lambda_{n,j})^2} = c_n\theta_* \int_{-\infty}^0 \frac{-y }{(\theta_*-y)^2} \mu_n(dy)= c_n \theta_*\Phi_n'(\theta_*).
    \end{align}
    We claim that
    \begin{align} \label{eq. boundedness of sigma n k}
        0 < \liminf_{n\to\infty} \inf_{k} \frac{1}{c_n} \widetilde{\sigma}_{n,k}^2 \leq \limsup_{n\to\infty} \sup_{k} \frac{1}{c_n} \widetilde{\sigma}_{n,k}^2 < \infty,
    \end{align}
    where again ${k \in \{0,\dots,n\} \cap [ac_n,bc_n] }$ for $[a,b]\subset (\underline{m}, \overline{m})$.
    The claim follows from the following argument.
    \begin{enumerate}
        \item  
        By the uniform convergence \eqref{eq. theta* asymp Inverse Phi}, we have $\theta_*(n,k) \in \Phi^{-1}([a/2,2b])$ for sufficiently large $n\in\mathbb{N}$ for all $k \in \{0,\dots,n\} \cap [ac_n,bc_n]$.
        In other words, for the same $n$ and $k$, the value $\theta_*(n,k)$ is bounded away from $0$ and $\infty$.
        \smallskip 
        \item We now show the upper bound in \eqref{eq. boundedness of sigma n k}.
        An elementary bound for integrands yields
        \begin{align*}
            \int_{-\infty}^0 \frac{-y}{(\theta_* -y)^2} \mu_n(dy) &\leq \int_{-\infty}^{-1} \frac{1}{\theta_*-y} \mu_n(dy) + \frac{1}{\theta_*} \int_{-1}^0 \frac{1}{\theta_* - y} \mu_n(dy)
            \leq \max\Big\{1, \frac{1}{\theta_*}\Big\}\Phi_n(\theta_*).
        \end{align*}
        Substituting it into \eqref{eq. def of sigma tilde}, the upper bound follows from the convergence of $\Phi_n\to\Phi$ and \eqref{eq. theta* asymp Inverse Phi}.
        \smallskip 
        \item We show the lower bound in \eqref{eq. boundedness of sigma n k}.
        Since $\Phi_n(z)$ and $\Phi(z)$ are strictly increasing in $z>0$ and $\Phi_n'\to\Phi'$ locally uniformly, we observe that
        \begin{equation*}
            \liminf_{n\to\infty}\inf_k\Phi_n'(\theta_*)\ge \inf_{x\in[a/2,2b]} \Phi'(x)>0.
        \end{equation*}
        Putting it into \eqref{eq. def of sigma tilde}, we obtain the lower bound.
    \end{enumerate}
    These prove the claim \eqref{eq. boundedness of sigma n k}.

    Since $c_n \to \infty$ as $n\to\infty$, the inequalities \eqref{eq. boundedness of sigma n k} implies $\tilde \sigma_{n,k} \to \infty$ for ${k \in \{0,\dots,n\} \cap [ac_n,bc_n] }$ as $n\to\infty$. Due to the divergence, we can apply the local limit theorem \cite[Theorem 2]{Be73} to $\widetilde{\P}_{\theta_*(n,k)}(x) := \widetilde{\E}_{\theta_*(n,k)} x^{S_n}$, hence
    \begin{align*}
        \sup_{x\in\R} \bigg| \widetilde{\sigma}_{n,k} \widetilde{\P}_{\theta_*(n,k)}\Big[S_n=\lfloor \widetilde{\sigma}_{n,k}x\rfloor + k \Big] - \frac{1}{\sqrt{2\pi}} e^{-x^2/2} \bigg| \xrightarrow{n\to\infty} 0.
    \end{align*}
    Substituting $x=0$, we obtain
    \begin{equation*}
        \widetilde{\P}_{\theta_*(n,k)}[S_n= k] \sim \frac{1}{\sqrt{2\pi} \widetilde{\sigma}_{n,k}}. 
    \end{equation*}
    Together with the uniformity of the asymptotics of $\tilde\sigma_{n,k}$ in $k$ from \eqref{eq. boundedness of sigma n k}, we conclude that
    \begin{align} \label{eq. log P tilde asymp 0}
        \sup_{k \in \{0,\dots,n\} \cap [ac_n,bc_n] } \bigg| \frac{1}{c_n}\log \widetilde{\P}_{\theta_*(n,k)}(S_n=k) \bigg| \xrightarrow{n\to\infty} 0.
    \end{align}
  Putting \eqref{eq. theta* asymp Inverse Phi}, \eqref{eq. log Pn asymp}, and \eqref{eq. log P tilde asymp 0} into \eqref{eq. step 1 proof JKM25}, we obtain \eqref{eq. exp profile of coeff}, since $a_{n,k} = \P(S_n=k)$.
\end{proof}

\subsection{Proof of Theorem~\ref{Thm_main}}
We are now ready to prove Theorem~\ref{Thm_main}.

\begin{proof}[Proof of Theorem~\ref{Thm_main}]
    We first prove Theorem~\ref{Thm_main}~(i), the strong asymmetry regime.
    Choose $c_n = \sqrt{2n}$ and let
    \begin{equation*}
     \Psi_n(z) := \frac{1}{c_n} \log\Big( \sum_{k=0}^n z^kp_{2n,2k} \Big), \qquad \Phi_n(z) := z\Psi_n'(z).
    \end{equation*}
    By Proposition~\ref{Prop_generating function limit}, the sequences $\Psi_n(z)$ converge to $\Psi_{\rm s}(z)$ in $\C \setminus(-\infty,0]$.
    Using the definitions \eqref{eq. def Psi sA} and \eqref{eq. def Phi sA}, it follows that $\Psi_{\rm s}$ and $\Phi_{\rm s}$ satisfy the conditions in Theorem~\ref{Thm. zero dist and exp profile} with 
    $\underline{m} = 0$ and $\overline{m} = \infty$.
    Indeed, by twice differentiation of \eqref{eq. integral rep of Polylog Li 3/2} it follows analogously to \eqref{eq:psi_s''} that    
    \begin{align*}
    \Big(\frac{d}{du}\Big)^2  \int_0^\infty \log \Big( 1- (1-e^{u}) e^{-t^2} \Big) \,dt = \int_0^\infty \frac{e^{u}e^{-t^2}(1-e^{-t^2})}{(1- (1-e^{u}) e^{-t^2})^2} \,dt >0,  
    \end{align*}
    hence $\Psi_{\rm s}(e^u)$ is strictly convex on $(0,\infty)$ and $\Phi$ is strictly monotonic.
    Then, the conclusion of Theorem~\ref{Thm_main}~(i) directly follows from Theorem~\ref{Thm. zero dist and exp profile} with $\phi_{\rm s}=-g$.

    For the proof of the weak asymmetry regime, Theorem~\ref{Thm_main}~(ii), we choose $c_n = 2n$.
    Then, the above arguments again work after replacing $\Psi_{\rm s}$, $\Phi_{\rm s}$, and $\overline{m} = \infty$ by $\Psi_{\rm w}$, $\Phi_{\rm w}$, and $\overline{m} = 1$, respectively and recalling that convexity of $\Psi_{\rm w}(e^z)$ follows from \eqref{eq:psi_s''}.
    This finishes the proof.
\end{proof}

We conclude this section with a brief comment on the obstructions that arise in the odd-dimensional eGinOE. We begin by recalling the key ingredients used in the even-dimensional case.

Throughout the proof, the determinant formula \eqref{eq for det formula of gen function} plays a crucial role in the analysis. It reduces the study of the generating function of $p_{2n,2k}$ to the analysis of trace powers of a matrix $M_n$ via Lemma~\ref{lem. trace power expansion of the generating function}. Moreover, this representation implies that the generating function has only negative real zeros, a property that is subsequently exploited to apply Theorem~\ref{Thm. zero dist and exp profile}.

In the odd-dimensional case, however, no analogue of the determinant formula \eqref{eq for det formula of gen function} is currently available. While the generating function of $p_{2n+1,2k+1}$ admits a Pfaffian representation, it does not appear to reduce to a determinant in a tractable way, see \cite[Theorem 2.1]{Si07} and \cite[Proposition 1]{FN08}. In particular, the matrix involved lacks the checkerboard structure that enables such a reduction in the even-dimensional setting. Consequently, the strategy employed above cannot be directly extended to the odd-dimensional eGinOE.

 \section{Conclusion}

In this paper, we have investigated the statistics of the total number of real eigenvalues of random matrices belonging to the elliptic real Ginibre ensemble of size $n \times n$. We have analyzed both the weak asymmetry regime (corresponding to $1 - \tau = O(1/n)$) and the strong asymmetry regime (for fixed $\tau \in [0,1)$). In both cases, we derived the asymptotic form of the probability $p_{n,m}$ that a matrix possesses exactly $m$ real eigenvalues in the large $n$ limit.  

In the weak asymmetry regime, we focused on the scaling $m = O(n)$ and showed that  
\[
p_{n,m} \sim \exp[-n \, \phi_{\mathrm{w}}(m/n;\alpha)] \, ,
\]
where the rate function $\phi_{\mathrm{w}}(x;\alpha)$ was computed explicitly. This describes the full large deviation behaviour in this regime and, in particular, contains as a special case the Gaussian fluctuations corresponding to the minimum of $\phi_{\mathrm{w}}(x;\alpha)$. In the strong asymmetry regime, we instead considered the scaling $m = O(\sqrt{n})$ and obtained  
\[
p_{n,m} \sim \exp[-\sqrt{n} \, \phi_{\mathrm{s}}(m/\sqrt{n};\tau)] \, ,
\]
as stated in Theorem~\ref{Thm_main}(i). Here again, the minimum of the rate function $\phi_{\mathrm{s}}(x;\tau)$ corresponds to typical Gaussian fluctuations. However, in contrast to the strongly asymmetric case, this regime coexists with an additional large deviation regime $m = O(n)$, previously studied in \cite{MPTW16} using a Coulomb gas approach. Understanding precisely how these two regimes match and interpolate remains a challenging open problem.

Our results naturally give rise to several further questions. 
First, while we have focused on the total number of real eigenvalues along the entire real axis, it would be interesting to study the statistics of the number of real eigenvalues contained in a finite interval $[a,b] \subset \mathbb{R}$. 
One could, for example, examine how the cumulants of this local counting variable depend on the length or position of the interval; see \cite{ABES23,Fo15a} for a related discussion in the symmetric interval case. 
Second, it would be natural to explore the joint statistics of real and complex eigenvalues---for instance, the number of complex eigenvalues lying in a region of the complex plane that intersects the real axis.

Finally, our analysis relied on extending the recent techniques developed in \cite{JKM25, JKM25II} to extract large deviation form from the analysis of associated generating functions.  It would be interesting to explore whether similar methods can be applied to study large deviations and rate functions in other models of integrable probability. For instance, large-deviation forms akin to \eqref{eq. Gregory's ansatz} were conjectured for the distribution of the number of real roots of real random polynomials~\cite{SM07,SM08,PS18}. Notably, several parallels have been observed \cite{SM08} between the real roots of the so-called real Weyl polynomials and the real eigenvalues of the real Ginibre ensemble analyzed here. It would therefore be natural to investigate whether the methods developed in the present work can be extended to the study of real random polynomials and related problems~\cite{PS18}.

%
%

\begin{acks}[Acknowledgments]
The authors would like to thank Peter Forrester, Paul Krapivsky and Mihail Poplavskyi for useful discussions. 
\end{acks}
\begin{funding}
SSB was supported by the National Research Foundation of Korea grants (RS-2023-00301976, RS-2025-00516909). JJ was supported by the DFG priority program SPP 2265 Random Geometric Systems. GS was supported by ANR Grant No. ANR-23-CE30-0020-01 EDIPS.
\end{funding}


\begin{thebibliography}{999}
\bibitem{AB23} G. Akemann and S.-S. Byun, \emph{The Product of $m$ real $N \times N$ Ginibre matrices: Real eigenvalues in the critical regime $m=O(N)$}, Constr. Approx. \textbf{59} (2024), 31--59. 

\bibitem{ABE23} G. Akemann, S.-S. Byun and M. Ebke, \emph{Universality of the number variance in rotational invariant two-dimensional Coulomb gases}, J. Stat. Phys. \textbf{190} (2023), 1--34. 

\bibitem{ABES23} G.~Akemann, S.-S. Byun, M. Ebke and G. Schehr, \emph{Universality in the number variance and counting statistics of the real and symplectic Ginibre ensemble}, J. Phys. A \textbf{56} (2023), 495202.

\bibitem{ABL25} G. Akemann, S.-S. Byun and Y.-W. Lee, \emph{The probability of almost all eigenvalues being real for the elliptic real Ginibre ensemble}, Nonlinearity \textbf{38} (2025), 105015.



\bibitem{AK07} G.~Akemann and E.~Kanzieper, \emph{Integrable structure of Ginibre's ensemble of real random matrices and a {P}faffian integration theorem}, J. Stat. Phys. \textbf{129} (2007), 1159--1231. 

\bibitem{ATW14} R.~Allez, J.~Touboul and G.~Wainrib, \emph{Index distribution of the Ginibre ensemble}, J. Phys. A \textbf{47} (2014), 042001.

\bibitem{ACCL24} Y. Ameur, C. Charlier, J. Cronvall and J. Lenells, \emph{Disk counting statistics near hard edges of random normal matrices: the multi-component regime}, Adv. Math. \textbf{441} (2024), 109549.

\bibitem{BFK21} G. Ben Arous, Y. V. Fyodorov and B. A. Khoruzhenko. \emph{Counting equilibria of large complex systems by instability index}, Proc. Natl. Acad. Sci. USA \textbf{118} (2021), e2023719118.

\bibitem{BB98} C. M. Bender and S. Boettcher, \emph{Real spectra in non-Hermitian Hamiltonians having PT symmetry}, Phys. Rev. Lett. {\bf 80} (1998), 5243.

\bibitem{Be73} E. Bender, \emph{Central and local limit theorems applied to asymptotic enumeration}, J. Combinatorial Theory Ser. A \textbf{15} (1973), 91--111.


\bibitem{BF25} S.-S.~Byun and P. J.~Forrester, \emph{Progress on the study of the Ginibre ensembles}, Springer Singapore, KIAS Springer Series in Mathematics (2025).

\bibitem{BF25a} S.-S.~Byun and P. J.~Forrester, \emph{Electrostatic computations for statistical mechanics and random matrix applications}, arXiv:2510.14334.

\bibitem{BKLL23} S.-S. Byun, N.-G. Kang, J. O. Lee and J. Lee, \emph{Real eigenvalues of elliptic random matrices}, Int. Math. Res. Not. \textbf{2023} (2023), 2243--2280.

\bibitem{BL24} S.-S. Byun and Y.-W. Lee, \emph{Finite size corrections for real eigenvalues of the elliptic Ginibre matrices}, Random Matrices Theory Appl. \textbf{13} (2024), 2450005. 

\bibitem{BMS25} S.-S.~Byun, L. D.~Molag and N.~Simm, \emph{Large deviations and fluctuations of real eigenvalues of elliptic random matrices}, Electron. J. Probab. \textbf{30} (2025), 40 pp.

\bibitem{BN25} S.-S. Byun and K. Noda, \emph{Real eigenvalues of asymmetric Wishart matrices: Expected number, global density and integrable structure}, arXiv:2503.14942. 

\bibitem{BP26} S.-S. Byun and S. Park, \emph{Large gap probabilities of complex and symplectic spherical ensembles with point charges}, J. Funct. Anal. \textbf{290} (2026), 111260.

\bibitem{DG09}
B. Derrida and A. Gerschenfeld, \emph{Current fluctuations of the one dimensional symmetric simple exclusion process with step initial condition}, J. Stat. Phys. {\bf 136} (2009), 1.


\bibitem{Ch22} C. Charlier, \emph{Asymptotics of determinants with a rotation-invariant weight and discontinuities along circles}, Adv. Math. \textbf{408} (2022), 108600.

\bibitem{Ch23} C. Charlier, \emph{Large gap asymptotics on annuli in the random normal matrix model}, Math. Ann. \textbf{388} (2024), 3529--3587.


\bibitem{CESX22} G. Cipolloni, L. Erd\H{o}s, D. Schr\"oder and Y. Xu, \emph{Directional extremal statistics for Ginibre eigenvalues}, J. Math. Phys. \textbf{63} (2022), 103303.


\bibitem{DZ10} A. Dembo and O. Zeitouni, \emph{Large deviations techniques and applications}, corrected reprint of the second edition, Springer, Berlin (2010).

\bibitem{Ed97}
A. Edelman, \emph{The probability that a random real Gaussian matrix has k real eigenvalues, related distributions, and the circular law}, J. Multivariate Anal. \textbf{60} (1997), 203--232.

\bibitem{EKS94}
A. Edelman, E. Kostlan and M. Shub, \emph{How many eigenvalues of a random matrix are real?} J. Amer. Math. Soc. \textbf{7} (1994), 247--267.

\bibitem{Efe97} K. B. Efetov, \emph{Directed quantum chaos}, Phys. Rev. Lett. \textbf{79} (1997), 491.


\bibitem{FS23a} W. FitzGerald and N. Simm, \emph{Fluctuations and correlations for products of real asymmetric random matrices}, Ann. Inst. Henri Poincar\'e Probab. Stat. \textbf{59} (2023), 2308--2342.

  
\bibitem{Fo10} P. J. Forrester, \emph{Log-gases and random matrices}, Princeton University Press, Princeton, NJ (2010).


\bibitem{Fo14} P. J. Forrester, \emph{Probability of all eigenvalues real for products of standard Gaussian matrices}, J. Phys. A \textbf{47} (2014), 065202. 

\bibitem{Fo15a}
P. J. Forrester, \emph{Diffusion processes and the asymptotic bulk gap probability for the real Ginibre ensemble}, J. Phys. A \textbf{48} (2015), 324001.


\bibitem{Fo24} P. J.~Forrester, \emph{Local central limit theorem for real eigenvalue fluctuations of elliptic GinOE matrices}, Electron. Commun. Probab. \textbf{29} (2024), 1--11.

\bibitem{Fo25} P. J.~Forrester, \emph{Asymptotics of the real eigenvalue distribution for the real spherical ensemble}, arXiv:2508.04139. 


\bibitem{FI16} P. J. Forrester and J. R. Ipsen, \emph{Real eigenvalue statistics for products of asymmetric real Gaussian matrices}, Linear Algebra Appl. \textbf{510} (2016), 259--290.

\bibitem{FIK20} P. J. Forrester, J. R. Ipsen and S. Kumar, \emph{How many eigenvalues of a product of truncated orthogonal matrices are real?}, Exp. Math. \textbf{29} (2020), 276--290.

\bibitem{FK18}  P. J. Forrester and S. Kumar, \emph{The probability that all eigenvalues are real for products of truncated real orthogonal random matrices}, J. Theoret. Probab. \textbf{31} (2018) 2056--2071.


\bibitem{FM09} P. J. Forrester and A. Mays, \emph{A method to calculate correlation functions for $\beta = 1$ random matrices of odd size}, J. Stat. Phys. \textbf{134} (2009), 443--462.

\bibitem{FM12} P. J. Forrester and A. Mays, \emph{Pfaffian point processes for the Gaussian real generalised eigenvalue problem}, Prob. Theory and Rel. Fields \textbf{154} (2012), 1--47.


\bibitem{FN07} P. J. Forrester and T. Nagao, \emph{Eigenvalue statistics of the real Ginibre ensemble}, Phys. Rev. Lett. \textbf{99} (2007), 050603.

\bibitem{FN08} P. J. Forrester and T. Nagao, \emph{Skew orthogonal polynomials and the partly symmetric real Ginibre ensemble}, J. Phys. A \textbf{41} (2008), 375003.


\bibitem{Fyo16} Y. V. Fyodorov, \emph{Topology trivialization transition in random nongradient autonomous ODEs on a sphere}, J. Stat. Mech. \textbf{2016} (2016), 124003. 

\bibitem{FKS97} Y. V. Fyodorov, B. A. Khoruzhenko and H.-J. Sommers, \emph{Almost-{H}ermitian random matrices: crossover from Wigner-Dyson to Ginibre eigenvalue statistics}, Phys. Rev. Lett. \textbf{79} (1997), 557--560. 

 

\bibitem{FKP24} Y. V. Fyodorov, B. A. Khoruzhenko and T. Prellberg, \emph{Zeros of conditional Gaussian analytic functions, random sub-unitary matrices and $q$-series}, arXiv:2412.06086. 
 
\bibitem{FKS98} Y. V. Fyodorov, B. A. Khoruzhenko and H.-J. Sommers, \emph{Universality in the random matrix spectra in the regime of weak non-Hermiticity},  Ann. Inst. H. Poincar\'e Phys. Th\'eor. \textbf{68} (1998), 449--489.


\bibitem{GPTZ18}
B. Garrod, M. Poplavskyi, R. Tribe and O. Zaboronski, \emph{Examples of interacting particle systems on Z as Pfaffian point processes: annihilating and coalescing random walks}, Ann. Henri Poincar\'e \textbf{19} (2018), 3635--3662.





\bibitem{GP19} M. Gebert and M. Poplavskyi, \emph{On pure complex spectrum for truncations of random orthogonal matrices and Kac polynomials}, arXiv:1905.03154. 

\bibitem{GPX24} A. Goel, P. Lopatto and X. Xie, \emph{Central limit theorem for the complex eigenvalues of Gaussian random matrices}, Electron.
Commun. Probab. \textbf{29} (2024), Paper No. 16, 13 pp.

\bibitem{HM25} X. Hu and Y. Ma, \emph{Convergence rate of extreme eigenvalue of Ginibre ensembles to Gumbel distribution}, arXiv:2506.04560. 

 

\bibitem{JKM25} J. Jalowy, Z. Kabluchko and A. Marynych, \emph{Zeros and exponential profiles of polynomials I: Limit distributions, finite free convolutions and repeated differentiation}, arXiv:2504.11593.

\bibitem{JKM25II} J. Jalowy, Z. Kabluchko and A. Marynych, \emph{Zeros and exponential profiles of polynomials II: Examples}, arXiv:2509.11248.


\bibitem{KA2005} E. Kanzieper and G. Akemann, \emph{Statistics of real eigenvalues in Ginibre’s ensemble of random real matrices},  Phys. Rev. Lett. {\bf 95} (2005), 230201.


\bibitem{KPTTZ15} E.~Kanzieper, M.~Poplavskyi, C.~Timm, R.~Tribe and O.~Zaboronski, \emph{What is the probability that a large random matrix has no real eigenvalues?}, Ann.~Appl.~Probab. \textbf{26} (2016), 2733--2753.

\bibitem{KMS14}
P. L. Krapivsky, K. Mallick and T. Sadhu, \emph{Melting of an Ising quadrant}, J. Phys. A {\bf 48} (2014), 015005.

\bibitem{Kiv24} P. Kivimae, \emph{Concentration of equilibria and relative instability in disordered non-relaxational dynamics}, Comm. Math. Phys. \textbf{405}, 289 (2024). 


\bibitem{LGCCKMS19}
B. Lacroix-A-Chez-Toine, J. A. M. Garz{\'o}n, C. S. H. Calva, I. P. Castillo, A. Kundu, S. N. Majumdar and G. Schehr, \emph{Intermediate deviation regime for the full eigenvalue statistics in the complex Ginibre ensemble},  Phys. Rev. E {\bf 100} (2019), 012137.


\bibitem[\protect\citeauthoryear{Lacroix-A-Chez-Toine et al}{2018}]{LGMS18}
B. Lacroix-A-Chez-Toine, A. Grabsch, S. N. Majumdar and G. Schehr, \emph{Extremes of 2d Coulomb gas: universal intermediate deviation regime}, J. Stat. Mech. Theory Exp. {\bf 2018} (2018), 013203.

\bibitem{LMS19}
B. Lacroix-A-Chez-Toine, S. N. Majumdar and G. Schehr, \emph{Rotating trapped fermions in two dimensions and the complex Ginibre ensemble: Exact results for the entanglement entropy and number variance}, Phys. Rev. A \textbf{99} (2019), 021602.


\bibitem{LS25} P. Le Doussal and G. Schehr, \emph{Linear statistics at the microscopic scale for the 2D Coulomb gas},  J. Phys. A  58 (2025), 485001.

\bibitem{LMS22} A. Little, F. Mezzadri and N. Simm, \emph{On the number of real eigenvalues of a product of truncated orthogonal random matrices}, Electron. J. Probab. \textbf{27} (2022), 32 pp.
 
\bibitem{MM25} Y. Ma and X. Meng, \emph{Precise convergence rate of spectral radius of product of complex Ginibre}, arXiv:2510.07942. 

\bibitem{May72} R. M. May, \emph{Will a large complex system be stable?}, Nature {\bf 238} (1972), 413.

\bibitem{MS17} J. A. Mingo and R. Speicher. \emph{Free probability and random matrices.} Vol. 35. New York: Springer (2017).

\bibitem{MMO25} J. Marzo, L. D. Molag and J. Ortega-Cerdà, \emph{Universality for fluctuations of counting statistics of random normal matrices}, J. Lond. Math. Soc. \textbf{113} (2026), e70462.

\bibitem{MPTW16}
L. C. G. del Molino, K. Pakdaman, J. Touboul and G. Wainrib, \emph{The real Ginibre ensemble with $k=O(n)$ real eigenvalues}, J. Stat. Phys. \textbf{163} (2016), 303--323.


\bibitem{NIST}  F. W. J. Olver, D. W. Lozier, R. F. Boisvert and C. W. Clark, eds. \emph{NIST Handbook of Mathematical Functions}, Cambridge: Cambridge University Press (2010).


\bibitem{PS18}
M. Poplavskyi and G. Schehr, \emph{Exact persistence exponent for the 2d-diffusion equation and related Kac polynomials}, 
Phys. Rev. Lett. {\bf 121} (2018), 150601, see Supp. Mat. in https://arxiv.org/abs/1806.11275

\bibitem{RA06}
K. Rajan and L. F. Abbott, \emph{Eigenvalue spectra of random matrices for neural networks}, Phys. Rev. Lett. {\bf 97} (2006), 188104.


\bibitem{SM07} G. Schehr and S. N. Majumdar, \emph{ Statistics of the number of zero crossings: from random polynomials to the diffusion equation}, Phys. Rev. Lett. {\bf 99} (2007), 060603.

\bibitem{SM08}
G. Schehr and S. N. Majumdar, {\it Real roots of random polynomials and zero crossing properties of diffusion equation}, J. Stat. Phys. {\bf 132} (2008), 235.

\bibitem{Si07} C. D. Sinclair, \emph{Averages over {G}inibre's ensemble of random real matrices}, Int. Math. Res. Not. \textbf{2007} (2007), rnm015, 15 pp.

\bibitem{Si09} C.~D. Sinclair, \emph{Correlation functions for $\beta=1$ ensembles of matrices of odd size}, J. Stat. Phys. {\bf 136} (2009), 17--33.

\bibitem{Si17} N. Simm, \emph{Central limit theorems for the real eigenvalues of large Gaussian random matrices}, Random Matrices Theory Appl. \textbf{6} (2017), 1750002. 

\bibitem{Si17a} N. Simm, \emph{On the real spectrum of a product of Gaussian matrices}, Electron. Commun. Probab. \textbf{22} (2017), 11.

\bibitem{SCSS88}
H. J. Sommers, A. Crisanti, H. Sompolinsky and Y. Stein, \emph{Spectrum of large random asymmetric matrices}, Phys. Rev. Lett. {\bf 60} (1988), 1895.

\bibitem{Ste17}
J. M. St{\'e}phan, \emph{Return probability after a quench from a domain wall initial state in the spin-1/2 XXZ chain}, J. Stat. Mech. (2017), 1013108.

\bibitem{XKLOS22}
Z. Xiao, K. Kawabata, X. Luo, T. Ohtsuki and R. Shindou, \emph{Level statistics of real eigenvalues in non-Hermitian systems}, Phys. Rev. Res. {\bf 4} (2022), 043196.

\bibitem{XZ24} Y. Xu and Q. Zeng, \emph{Large deviations for the extremal eigenvalues of Ginibre ensembles}, Acta Math. Sin. (to appear), arXiv:2512.12711. 

\end{thebibliography}
\end{document}